\numberwithin{equation}{section}                        
\let\@upn\@iden
\newtheoremstyle{ToddTheorem}                           
  {3pt}
  {3pt}
  {\slshape}
  {}
  {\bfseries\upshape}
  {.}
  {.5em}
  {}
\newtheoremstyle{ToddDefinition}                        
  {3pt}
  {3pt}
  {\upshape}
  {}
  {\bfseries\slshape}
  {.}
  {.5em}
  {}
\newcommand*{\c@theoremassumption}{\c@theorem}
\newcommand*{\p@theoremassumption}{\p@theorem}
\newcommand*{\c@theoremconjecture}{\c@theorem}
\newcommand*{\p@theoremconjecture}{\p@theorem}
\newcommand*{\c@theoremcorollary}{\c@theorem}
\newcommand*{\p@theoremcorollary}{\p@theorem}
\newcommand*{\c@theoremdefinition}{\c@theorem}
\newcommand*{\p@theoremdefinition}{\p@theorem}
\newcommand*{\c@theoremexample}{\c@theorem}
\newcommand*{\p@theoremexample}{\p@theorem}
\newcommand*{\c@theoremfigure}{\c@theorem}
\newcommand*{\p@theoremfigure}{\p@theorem}
\newcommand*{\c@theoremhypothesis}{\c@theorem}
\newcommand*{\p@theoremhypothesis}{\p@theorem}
\newcommand*{\c@theoremlemma}{\c@theorem}
\newcommand*{\p@theoremlemma}{\p@theorem}
\newcommand*{\c@theoremproposition}{\c@theorem}
\newcommand*{\p@theoremproposition}{\p@theorem}
\newcommand*{\c@theoremremark}{\c@theorem}
\newcommand*{\p@theoremremark}{\p@theorem}
\newcommand*{\c@theoremnotation}{\c@theorem}
\newcommand*{\p@theoremnotation}{\p@theorem}
\newtheoremstyle{SpringerTheorem}                           
  {3pt}
  {3pt}
  {\itshape}
  {}
  {\bfseries}
  {.}
  {.5em}
  {}
\newtheoremstyle{SpringerDefinition}                        
  {3pt}
  {3pt}
  {\rmfamily}
  {}
  {\bfseries}
  {.}
  {.5em}
  {}
\newtheoremstyle{SpringerExample}                        
  {3pt}
  {3pt}
  {\rmfamily}
  {}
  {\itshape}
  {.}
  {.5em}
  {}
\theoremstyle{SpringerTheorem} %
\newtheorem{theorem}{Theorem}[section]
\newtheorem{corollary}[theoremcorollary]{Corollary}
\newtheorem{lemma}[theoremlemma]{Lemma}
\newtheorem{proposition}[theoremproposition]{Proposition}
\theoremstyle{SpringerDefinition} %
\theoremstyle{SpringerExample} %
\newtheorem{remark}[theoremremark]{Remark}
\newcommand{\TheAuthor}{}
\newcommand{\Author}[1]%
        {\renewcommand{\TheAuthor}{#1}}                 
\newcommand{\TheRunningTitle}{}
\newcommand{\RunningTitle}[1]%
        {\renewcommand{\TheRunningTitle}{#1}}           
\renewcommand{\footrulewidth}{0.4pt}
\newenvironment{acknowledgment}%
  {\begin{trivlist}\item[]\textbf{Acknowledgments.}}{\end{trivlist}}
\providecommand*{\toclevel@assumption}{0}
\providecommand*{\toclevel@conjecture}{0}
\providecommand*{\toclevel@corollary}{0}
\providecommand*{\toclevel@definition}{0}
\providecommand*{\toclevel@example}{0}
\providecommand*{\toclevel@figure}{0}
\providecommand*{\toclevel@hypothesis}{0}
\providecommand*{\toclevel@lemma}{0}
\providecommand*{\toclevel@proof}{0}
\providecommand*{\toclevel@proposition}{0}
\providecommand*{\toclevel@remark}{0}
\providecommand*{\toclevel@theorem}{0}
\newcommand{\eref}[1]{\hyperref[{#1}]{(\ref*{#1})}}
\newcommand{\R}{\mathbb{R}}
\def\dim{\mathop\mathrm{dim}\nolimits}
\def\ess{\mathop\mathrm{ess}\nolimits}
\def\gker{\mathop\mathrm{gker}\nolimits}
\def\Ham{\mathop\mathrm{Ham}\nolimits}
\def\id{\mathcal{I}}
\def\ker{\mathop\mathrm{ker}\nolimits}
\def\sgn{\mathop\mathrm{sign}\nolimits}
\def\Span{\mathop\mathrm{span}\nolimits}
\def\Re{\mathop\mathrm{Re}\nolimits}
\def\Im{\mathop\mathrm{Im}\nolimits}
\def\coloneqq{\mathrel{\mathop:}=}
\newcommand{\rmc}{\mathrm{c}}
\newcommand{\rmd}{\mathrm{d}}
\newcommand{\rme}{\mathrm{e}}
\newcommand{\rmi}{\mathrm{i}}
\newcommand{\rmn}{\mathrm{n}}
\newcommand{\rmp}{\mathrm{p}}
\newcommand{\rmq}{\mathrm{q}}
\newcommand{\rmr}{\mathrm{r}}
\newcommand{\rmv}{\mathrm{v}}
\newcommand{\calH}{\mathcal{H}}
\newcommand{\calI}{\mathcal{I}}
\newcommand{\calJ}{\mathcal{J}}
\newcommand{\calK}{\mathcal{K}}
\newcommand{\calL}{\mathcal{L}}
\newcommand{\calM}{\mathcal{M}}
\newcommand{\calS}{\mathcal{S}}
\newcommand{\vD}{\bm{\mathit{D}}}
\newcommand{\ve}{\bm{\mathit{e}}}
\newcommand{\vp}{\bm{\mathit{p}}}
\newcommand{\cm}{\mathscr{C}}
\newcommand{\f}[2]{\frac{#1}{#2}}
\newcommand{\dpr}[2]{\langle #1,#2 \rangle}
\newcommand{\al}{\alpha}
\newcommand{\de}{\delta}
\renewcommand{\ve}{\varepsilon}
\newcommand{\ka}{\kappa}
\newcommand{\la}{\lambda}
\newcommand{\si}{\sigma}
\renewcommand{\vp}{\varphi}
\newcommand{\tL}{\calL^\diamondsuit}
\newcommand{\cs}{\mathcal S}
\newcommand{\cl}{\mathcal L}
\renewcommand{\cm}{\mathcal M}
\newcommand{\p}{\partial}
\newcommand{\beq}{\begin{equation}}
\newcommand{\eeq}{\end{equation}}
\newcommand{\beqna}{\begin{eqnarray*}}
\newcommand{\eeqna}{\end{eqnarray*}}
\newcommand{\beqn}{\begin{equation*}}
\newcommand{\eeqn}{\end{equation*}}
\newcommand{\bp}{\begin{proof}}
\newcommand{\ep}{\end{proof}}
\newcommand{\bprop}{\begin{proposition}}
\newcommand{\eprop}{\end{proposition}}
\newcommand{\bt}{\begin{theorem}}
\newcommand{\et}{\end{theorem}}
\newcommand{\bex}{\begin{Example}}
\newcommand{\eex}{\end{Example}}
\newcommand{\bc}{\begin{corollary}}
\newcommand{\ec}{\end{corollary}}
\newcommand{\bcl}{\begin{claim}}
\newcommand{\ecl}{\end{claim}}
\newcommand{\bl}{\begin{lemma}}
\newcommand{\el}{\end{lemma}}
\begin{document}

\title{A Hamiltonian-Krein (instability) index theory for KdV-like eigenvalue problems}
\author{%
        Todd Kapitula %
        \thanks{E-mail: \href{mailto:tmk5@calvin.edu}{tmk5@calvin.edu}} \\
        Department of Mathematics and Statistics \\
        Calvin College \\
        Grand Rapids, MI 49546 %
\and
        Atanas Stefanov %
        \thanks{E-mail: \href{mailto:stefanov@math.ku.edu}{stefanov@math.ku.edu}} \\
        Department of Mathematics \\
        University of Kansas \\
        Lawrence, KS 66045-7594 %
}


\begin{titlingpage}
\usethanksrule
\setcounter{page}{0}                                    
\maketitle
\begin{abstract}
The Hamiltonian-Krein (instability) index is concerned with determining the
number of eigenvalues with positive real part for the Hamiltonian
eigenvalue problem $\calJ\calL u=\lambda u$, where $\calJ$ is
skew-symmetric and $\calL$ is self-adjoint. If $\calJ$ has a bounded
inverse the index is well-established, and it is given by the number of
negative eigenvalues of the operator $\calL$ constrained to act on some
finite-codimensional subspace. There is an important class of problems -
namely, those of KdV-type - for which $\calJ$ does not have a bounded
inverse. In this paper we overcome this difficulty and derive the index for
eigenvalue problems of KdV-type. We use the index to discuss the spectral
stability of homoclinic traveling waves for KdV-like problems and BBM-type
problems.
\end{abstract}

\cancelthanksrule
\renewcommand{\footrulewidth}{0.0pt}                    

\pdfbookmark[1]{\contentsname}{toc}                     
\tableofcontents                                        
\end{titlingpage}

\section{Introduction}

We consider the spectral problem of the form
\begin{equation}\label{5}
\partial_x\calL u= \lambda u,
\end{equation}
where $\calL$ is a self-adjoint linear differential Fredholm operator with
zero index and with domain $D(\calL)=H^s(\R)$ for some $s\geq 0$. Eigenvalue
problems of this type readily arise, e.g., when considering the stability of
waves to KdV-like problems. We will furthermore assume that
$\cl=\calL_0+ \calK$, where $\calK$ is relatively compact perturbation of
$\calL_0$, and $\calL_0$ is a ``constant coefficient'' strongly elliptic
operator\footnote{that is, $q(\xi)\geq \ka^2$ for some $\ka>0$} given by
$\widehat{\calL_0 f}(\xi)=q(\xi)\hat{f}(\xi)$. It will be assumed that for
the operator $\calL$,
\begin{enumerate}
\item there are $\rmn(\calL)<+\infty$ negative eigenvalues (counting
    multiplicity), and each of the corresponding eigenvectors
    $\{f_j\}_{j=1}^{\rmn(\calL)}$ belong to $H^{1/2}(\R)$.
\item
    $\sigma_{\ess}(\calL)=\sigma_{\ess}(\calL_0)=\mathrm{Range}(q)\subset[\kappa^2,+\infty), \ka>0$
\item $\dim[\ker(\calL)]=1$ with $\ker(\calL)=\Span\{\psi_0\}$, and
$\psi_0$ is real-valued and  $\psi_0\in H^{\infty}(\R)\cap \dot{H}^{-1}(\R)$.
\end{enumerate}
For the precise definitions of the various Sobolev spaces, consult
\autoref{sec:2}.

The goal of this paper is to compute an instability index, hereafter known as
the Hamiltonian-Krein index, for the eigenvalue problem \eref{5}. In the
derivation of instability indices for eigenvalue problems of the form
\[
\calJ\calL u=\lambda u,
\]
where $\calJ$ is skew-symmetric and $\calL$ is symmetric, it was crucial in
previous works that $\calJ$ have a bounded inverse on (at minimum) a finite
co-dimensional space (e.g., see
\citep{kapitula:cev04,kapitula:ace05,haragus:ots08,deconinck:ots10}). Define
the standard inner-product on $L^2(\R)$ by
\[
\langle f,g\rangle=\int_{-\infty}^{+\infty}f(x)\overline{g}(x)\,\rmd x.
\]
It is clear that the operator $\partial_x$ is skew-symmetric on $L^2(\R)$;
however, it does not have a bounded inverse. This is a reflection of the fact
that $\sigma(\partial_x)=\sigma_{\ess}(\partial_x)=\rmi\R$. The aim of this
paper is to overcome this obstacle. Briefly, this will be accomplished by
reducing the eigenvalue problem \eref{5} to an equivalent problem for which
the operator $\calJ$ does have a bounded inverse. However, by doing so it
will be the case that for the new operator $\calL$:
\begin{enumerate}
\item the essential spectrum will (generically) be $[0,\infty+)$, which
    violates the assumption present in the original computation of the
    Hamiltonian-Krein index that the essential spectrum be bounded away
    from the origin
\item the negative index of the new $\calL$, which is needed in the
    evaluation of the index, is not obvious.
\end{enumerate}
Both of these obstacles must be overcome before coming to the final
conclusion of \autoref{t:index}.

The paper is organized in the following manner. In \autoref{sec:2} we discuss
some preliminary ideas which will be needed in the analysis. The results
presented therein are not new, and are included solely to help make the paper
more accessible. In \autoref{sec:3} the equivalent eigenvalue problem is
derived, and properties of the new operator $\calL$ are given.
\autoref{s:hki} contains the main result of the paper. In \autoref{Sec:4.1} we
give a couple of applications of the theoretical result, and compare the
results here with what is already known in the literature.

\begin{acknowledgment}
TK gratefully acknowledges the support of the Jack and Lois Kuipers Applied
Mathematics Endowment, a Calvin Research Fellowship, and the National Science
Foundation under grant DMS-1108783. AS research is supported in part by
NSF-DMS 0908802.
\end{acknowledgment}

\section{Preliminaries}\label{sec:2}

Define the Fourier transform and its inverse via the formulas
\[
\hat{f}(\xi)=\int_{\R} f(x)\rme^{-  2\pi\rmi x \xi}\,\rmd x,\quad
f(x) =  \int_{\R} \hat{f}(\xi)\rme^{2\pi\rmi x \xi}\,\rmd\xi,
\]
which are valid for functions in the Schwartz class $\cs$. Introduce
fractional order differential operators via the Fourier transform, i.e. for $s\geq 0$,
\[
\widehat{|\p_x|^s f}(\xi):=(2\pi)^{s} |\xi|^s \hat{f}(\xi).
\]
The norm of the Sobolev space $H^s(\R),\,s\geq 0$, is given by
\[
\|f\|_{H^s}\coloneqq\left( \int_{\R} |\hat{f}(\xi)|^2 (1+\xi^2)^{s}\,\rmd\xi\right)^{1/2}.
\]
The space of infinitely smooth functions (with $L^2(\R)$ decay of all
derivatives),  $H^\infty:=\cap_{s=1}^\infty H^s$ is not a Banach space, but
it has a well-understood  Frechet space structure.

We also need to consider operators in the form $|\p_x|^{-\al}$ for some
$\al>0$. Regarding Sobolev spaces of negative order, we introduce the norm
\[
\|f\|_{\dot{H}^{-\al}}\coloneqq\left( \int_{\R} \f{|\hat{f}(\xi)|^2}{|\xi|^{2\al}}\,\rmd\xi\right)^{1/2},
\]
and say that a Schwartz function $f$ belongs to $\dot{H}^{-\al}(\R)$ if $\|f\|_{\dot{H}^{-\al}}$ is finite. The Banach space $\dot{H}^{-\al}(\R)$ is obtained as the completion of the  Schwartz class $\cs$ in this norm.
Note that the Sobolev spaces of negative order will in general contain
distributions\footnote{In fact, one may define $\dot{H}^{-\al}$ as the dual
space to $\dot{H}^{\al}$, with the obvious definitions. In doing that, one needs
to be careful since the ``norm'' $\|\cdot\|_{\dot{H}^{\al}}$  assigns zero value to
the constant functions and thus, those functions need to be mod-ed out.}.   Further note that
\begin{enumerate}
\item $|\p_x|^{-\al}:\dot{H}^{-\al}(\R)\mapsto L^2(\R)$ is an isometry
\item $|\p_x|^{\al}: L^2(\R)\mapsto \dot{H}^{-\al}(\R)$ is an isometry.
\end{enumerate}
Some of these operators have a nice representation as fractional integrals.
For example, (again for Schwartz functions)
\[
|\p_x|^{-1/2} f (x)= \f{1}{\sqrt{2\pi}} \int_{\R} \f{f(y)}{|x-y|^{1/2}}\,\rmd y;
\]
in particular, $|\p_x|^{-1/2} f$ is real-valued if $f$ is. Note that unless
$f$ has extra cancellation properties\footnote{At a minimum $\int f =0$, but
actually more, like $f\in {\mathcal H^1}(\R)$ - the Hardy space on the line},
then for large values of $x$ one has $|\p_x|^{-1/2} f\sim |x|^{-1/2}$ and
hence $|\p_x|^{-1/2} f\notin L^2(\R)$.

Finally, note that for $f\in \dot{H}^{-1}(\R)$ we may define the operator
$\p_x^{-1}$ via
$$
\widehat{\p_x^{-1} f}(\xi)=-\f{1}{2\pi i \xi} \hat{f}(\xi).
$$
The operator $\p_x^{-1}$ is skew-symmetric, as may be seen by the
Plancherel's theorem. In particular, for every real-valued $f\in
\dot{H}^{-1}(\R)\cap L^2(\R)$ we have that $\dpr{\p_x^{-1} f}{f}=0$. One may
identify $\dot{H}^{-1}(\R)$ as the space of distributional derivatives
$\p_x(L^2(\R))\subset \cs'$. More precisely,
$$
\dot{H}^{-1}(\R)=\p_x(L^2(\R))=\{h:\ h=\p_x f\in \cs',\, f\in L^2(\R)\},\quad
\|h\|_{\dot{H}^{-1}}d\coloneqq\|f\|_{L^2}.
$$

\subsection{Littlewood-Paley operators}

Let $\zeta\in C^\infty_0(\R)$ be a positive and even cut-off function which
satisfies
\[
\zeta(z)=\begin{cases}
 1,\quad & |z|<1 \\
 0,\quad & |z|>2.
\end{cases}
\]
For $a>0$ define the Littlewood-Paley operator $P_{<a}$ via
$$
\widehat{P_{<a} f}(\xi)=\zeta(\xi/a) \hat{f}(\xi).
$$
Naturally, we take $ P_{\geq a}=\id - P_{<a}$, where $\id$ is the identity
operator. The related operators $P_{\sim a}$ are defined via $P_{\sim
a}:=P_{<a}-P_{<\f{a}{2}}$. Alternatively, let $\vp(z):=\zeta(z)-\zeta(2 z)$
and let $ \widehat{P_{\sim a} f}(\xi)=\vp(\xi/a) \hat{f}(\xi). $ Note that by
the Hardy-Littlewood-Sobolev inequality, we have for all $1\leq p\leq \infty$
$$
\|P_{<a} f\|_{L^p}+ \|P_{\geq a} f\|_{L^p}+\\|P_{\sim a} f\|_{L^p}\leq C (1+ \|\hat{\zeta}\|_{L^1}) \|f\|_{L^p}.
$$
We will often denote\footnote{By slight abuse of notations, we will always
use $\widehat{f_{\sim a}}(\xi)\coloneqq\Phi(\xi/a)\hat{f}(\xi)$ and $\Phi$ is
supported around $1$ smooth function}
\[
f_{<a}\coloneqq P_{<a} f,\quad f_{\geq a}\coloneqq P_{\geq a} f,\quad f_{\sim a}=P_{\sim a} f.
\]
Note that the operators $P_{\sim a}$ provide a useful partition of unity.
Indeed, note that $\sum_{k=-\infty}^\infty \vp(2^{-k} \xi)=1$ for $\xi\neq
0$, and as a consequence
$$
\id = \sum_{k=-\infty}^\infty P_{\sim 2^k} = P_{<1}+\sum_{k=1}^\infty P_{\sim 2^k}.
$$
A version of the Sobolev embedding estimates (also known as Bernstein inequalities)  is given by
\begin{equation}
\label{sobol}
\|P_{\sim 2^k} f\|_{L^q}\leq C 2^{k(\f{1}{p}-\f{1}{q})} \|f\|_{L^p}.
\end{equation}
for all $1\leq p< q\leq \infty$.

We have the following lemma:

\begin{lemma}\label{le:dense}
The subspace $\{|\p_x|^{1/2}g: g \in H^{1/2}(\R)\}$ is dense in $L^2(\R)$.
\end{lemma}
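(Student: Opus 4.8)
The plan is to work entirely on the Fourier side, where $|\p_x|^{1/2}$ becomes multiplication by $(2\pi)^{1/2}|\xi|^{1/2}$, and to show directly that an arbitrary $h\in L^2(\R)$ can be approximated in norm by elements of the stated subspace. Recall that $g\in H^{1/2}(\R)$ is equivalent to $\int_\R|\hat g(\xi)|^2(1+\xi^2)^{1/2}\,\rmd\xi<\infty$, and that $\widehat{|\p_x|^{1/2}g}(\xi)=(2\pi)^{1/2}|\xi|^{1/2}\hat g(\xi)$; so the task is to understand which $L^2$ functions arise, in the closure, as $(2\pi)^{1/2}|\xi|^{1/2}\hat g(\xi)$ with $g\in H^{1/2}$.

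The naive preimage of a given $h$ is $\hat g(\xi)=(2\pi)^{-1/2}|\xi|^{-1/2}\hat h(\xi)$, but this generally fails to lie in $H^{1/2}$: near the origin the factor $|\xi|^{-1}$ can make $\int_{|\xi|<1}|\hat h(\xi)|^2|\xi|^{-1}\,\rmd\xi$ diverge. This low-frequency singularity at $\xi=0$ is the only real obstruction, and I would remove it by truncation. Concretely, for $\ve>0$ I would define $g_\ve$ by $\hat g_\ve(\xi)=(2\pi)^{-1/2}|\xi|^{-1/2}\hat h(\xi)\,\mathbbm{1}_{\{|\xi|>\ve\}}$.

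The two things to verify would then be routine. First, $g_\ve\in H^{1/2}(\R)$: on $\{|\xi|>\ve\}$ the weight satisfies $(1+\xi^2)^{1/2}/|\xi|=\sqrt{1+\xi^{-2}}\le\sqrt{1+\ve^{-2}}$, so $\int_\R|\hat g_\ve(\xi)|^2(1+\xi^2)^{1/2}\,\rmd\xi\le(2\pi)^{-1}\sqrt{1+\ve^{-2}}\,\|h\|_{L^2}^2<\infty$. Second, by construction $\widehat{|\p_x|^{1/2}g_\ve}(\xi)=\hat h(\xi)\,\mathbbm{1}_{\{|\xi|>\ve\}}$, so Plancherel gives $\||\p_x|^{1/2}g_\ve-h\|_{L^2}^2=\int_{|\xi|\le\ve}|\hat h(\xi)|^2\,\rmd\xi$, which tends to $0$ as $\ve\to0$ by dominated convergence (since $\hat h\in L^2$). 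Hence every $h\in L^2(\R)$ is an $L^2$-limit of elements $|\p_x|^{1/2}g_\ve$ of the subspace, which proves density.

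I do not expect any serious obstacle beyond correctly isolating the $\xi=0$ singularity; the whole content is that the multiplier $|\xi|^{1/2}$ vanishes only on a null set. An equivalent route would be by duality: if $h\perp\{|\p_x|^{1/2}g:g\in H^{1/2}\}$, then testing against functions $g$ with $\hat g$ supported away from the origin forces $|\xi|^{1/2}\hat h(\xi)=0$ a.e., hence $\hat h=0$ a.e. and $h=0$. The explicit truncation above is preferable, however, since it exhibits the approximating sequence directly.
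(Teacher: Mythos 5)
Your proof is correct and is essentially the same as the paper's: both truncate the low frequencies by setting $\hat g(\xi)=(2\pi)^{-1/2}|\xi|^{-1/2}\hat h(\xi)$ for $|\xi|$ above a small cutoff and zero below it, check $H^{1/2}$ membership, and use Plancherel to see that the error is the low-frequency mass of $\hat h$, which vanishes as the cutoff shrinks. The only cosmetic difference is that the paper bounds $\||\p_x|^{1/2}g\|_{L^2}$ and $\|g\|_{L^2}$ separately rather than bounding the weight $(1+\xi^2)^{1/2}/|\xi|$ directly.
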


\begin{proof}
Let $\ve>0$ and $f\in L^2(\R)$ be given function. Then there exists $\de>0$
so that
$$
\int_{-\de}^\de |\hat{f}(\xi)|^2  d\xi\leq \ve^2.
$$
Define
$$
\hat{g}(\xi)\coloneqq
\begin{cases}{cc}
0,\quad & \ |\xi|\leq \de \\
\hat{f}(\xi)/\sqrt{2\pi |\xi|},\quad   & |\xi|>\de.
\end{cases}
$$
It follows that $g\in H^{1/2}(\R)$ - in fact, $\||\p_x|^{1/2}
g\|_{L^2}^2=\int_{|\xi|>\de} |\hat{f}(\xi)|^2\,\rmd\xi\leq \|f\|_{L^2}^2$ -
while $\|g\|_{L^2}^2\leq\|f\|_{L^2}^2/(2\pi\de)$. In addition, by
Plancherel's
$$
\|f-|\p_x|^{1/2}g\|_{L^2}^2=\int_{-\de}^\de |\hat{f}(\xi)|^2\,\rmd\xi\leq \ve^2.\qedhere
$$
\end{proof}

\section{The equivalent eigenvalue problem}\label{sec:3}

\subsection{The reformulation}

We proceed with the reformulation of the eigenvalue problem \eref{5}. We
first note that for nonzero eigenvalues it will necessarily be the case that
$u\in \dot{H}^{-1}(\R)$ if $u\in D(\calL)=H^s(\R)$. Indeed, from \eref{5}
\[
\|u\|_{\dot{H}^{-1}}=\f{1}{|\la|}\|\calL u\|_{L^2}.
\]
This observation motivates the following change of variables. Set
\[
u=|\p_x|^{1/2} v\quad\Leftrightarrow\quad v=|\p_x|^{-1/2} u.
\]
Note that $v\in \dot{H}^{-1/2}(\R)\cap H^{s+1/2}(\R)$, since
$|\p_x|^{-1/2}:\dot{H}^{-1}(\R)\cap H^s(\R) \mapsto\dot{H}^{-1/2}(\R)\cap H^{s+1/2}(\R)$ is a bounded map. The
eigenvalue problem for $v$ becomes
\[
\p_x |\p_x|^{-1/2} \cl |\p_x|^{1/2} v= \la v,
\]
which can be massaged to
\[
\p_x |\p_x|^{-1}\cdot|\p_x|^{1/2}\calL |\p_x|^{1/2}v=\lambda v.
\]
Upon introducing the new operators
\begin{equation}
\label{a:10}
\calJ\coloneqq\p_x |\p_x|^{-1},\quad
\tL\coloneqq|\p_x|^{1/2} \cl |\p_x|^{1/2},
\end{equation}
we now see that \eref{5} for $u\in L^2(\R)$ can be rewritten as
\begin{equation}\label{20}
\calJ\tL v= \la v,\quad v\in\dot{H}^{-1/2}(\R)\cap H^{s+1/2}(\R).
\end{equation}
 
Consider the operator $\tL$.  Clearly, while  \eref{a:10}   specifies the
action of $\tL$ on smooth vectors, it does not address the important issue of
whether or not $\tL$ is  self-adjoint\footnote{even though it is clearly a
symmetric operator}. For this, one needs to specify a domain. We would like
to point out that there are several (potentially different ways) to obtain a
self-adjoint extension. For the purposes of this section, we proceed in a
canonical way, by building the Friedrich's extension. We will however give a
more direct  construction in \autoref{Sec:4.1}. We follow the arguments in
\citep[Theorem~VIII.15]{reed:fa80}.  More concretely,  consider the bilinear
form
\[
\rmq(f,g)\coloneqq\langle |\partial_x|^{1/2} \calL(|\partial_x|^{1/2} f),   g\rangle =
\dpr{\cl |\partial_x|^{1/2} f}{|\partial_x|^{1/2} g}.
\]
According to \citep[Theorem~VIII.15]{reed:fa80}, if we show that the
quadratic form $\rmq$ is semi-bounded (that is $\rmq(f,f)\geq -M \|f\|^2$ for
some $M$), then $\rmq$ is the quadratic form of an unique self-adjoint
operator, the Friedrich's extension, which we call again $\tL$, with domain
$D(\tL)=\{f\in H^{s+1}(\R): \tL f\in L^2(\R)\}\subset H^{s+1}(\R)$. Let
$\{f_j\}_{j=1}^N$ be a normalized basis of the finite-dimensional negative
subspace of $\cl$, i.e.  $\cl f_j = -\mu_j^2 f_j, j=1, \ldots, N$. In order
to  show the semi-boundedness of $\rmq$,   decompose
$$
|\partial_x|^{1/2} f=h+\sum_{j=1}^N \dpr{|\partial_x|^{1/2} f}{f_j} f_j=
 h+\sum_{j=1}^N \dpr{ f}{|\partial_x|^{1/2}  f_j} f_j,
$$
where $\dpr{\cl h}{h}\geq 0$, since $h\in\Span[f_1, \dots, f_N]^\perp$.
 We have that
$$
\rmq(f,f)=\dpr{\cl |\partial_x|^{1/2} f}{|\partial_x|^{1/2} f}=
\dpr{\cl h}{h}-\sum_{j=1}^N \mu_j^2 \dpr{ f}{|\partial_x|^{1/2}  f_j}^2\geq -
M\|f\|^2,
$$
where $M=N \sup_{j\in [1,N]} (\mu_j^2\|f_j\|_{H^{1/2}}^2)$. Thus, we have
constructed the Friedrich's extension of $\tL$ by virtue of
\citep[Theorem~VIII.15]{reed:fa80}.

Next, several comments are in order regarding the operator $\calJ$. Not only is
this operator skew-symmetric on $L^p(\R)$ for any $1<p<+\infty$, it is a
classical operator, well-studied in the literature; namely, the Hilbert
transform. It can be alternatively defined (on Schwartz functions) via the
formula
\[
\widehat{\calJ f}(\xi)=-\rmi\sgn(\xi) \hat{f}(\xi),
\]
or it can be defined as the singular integral
\[
\calJ f(x)=\f{1}{\pi}\rmp.\rmv. \int_{-\infty}^\infty \f{f(y)}{x-y}\,\rmd y.
\]
Unlike the operator $\partial_x$, the Hilbert transform is a bounded operator
on a variety of function spaces; in particular, on all $L^p(\R)$ for
$1<p<+\infty$. Furthermore, on $L^2(\R)$ it is the case that
$\calJ:L^2(\R)\mapsto L^2(\R)$ is an isometry with $(\calJ)^{-1}=-\calJ$
\citep[Chapter~16.3.2]{lax:fa02}.

We will concentrate our interest on the spectral stability/instability of the
linear system \eref{5}. We say that the linearized problem \eref{5} is
(spectrally) unstable if there is a $\lambda$ with positive real part and a
corresponding function $u\in D(L)\cap H^\infty(\R)\cap \dot{H}^{-1}(\R)$ so
that \eref{5} is satisfied in classical sense. Otherwise, the problem is
spectrally stable. Clearly, spectral instability/stability is equivalent to
the existence (non-existence, respectively) of solutions $v$ to \eref{20}
with $\Re\la>0$. In conclusion, the eigenvalue problem \eref{20} is the
correct one to study in order to apply the previous Hamiltonian-Krein
(instability) index theorems. The application of these theories will require
a careful study of the operator $\tL$.

%
%

\subsection{Relation between the point spectrums of $\calL$ and $\tL$}
 
Before we relate the negative spectrum of the sandwiched operator $\tL$ to
that of $\calL$, we must first understand the kernel of $\tL$.

\begin{lemma}\label{l:kertL}
Regarding the operator $\tL$ we have that $\dim[\ker(\tL)]=1$ with
$\ker(\tL)=\Span\{|\partial_x|^{-1/2}\psi_0\}$.
\end{lemma}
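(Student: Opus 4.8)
The plan is to prove the two inclusions separately, the identification resting on the elementary observation that $|\p_x|^{1/2}$ is injective on $L^2(\R)$: if $h\in L^2(\R)$ and $|\p_x|^{1/2}h=0$, then $|\xi|^{1/2}\hat h(\xi)=0$ a.e., so $\hat h$ vanishes off the measure-zero set $\{\xi=0\}$ and $h=0$. Throughout I would lean on the isometry properties of $|\p_x|^{\pm1/2}$ recorded in \autoref{sec:2}, together with the fact that $|\p_x|^{-1/2}|\p_x|^{1/2}=\id$ on $L^2(\R)$ (both multipliers being equal to $1$ away from $\xi=0$).

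First I would verify that $w_0\coloneqq|\p_x|^{-1/2}\psi_0$ is a genuine element of $L^2(\R)$ lying in the Friedrich's domain $D(\tL)$, and that $\tL w_0=0$. Membership $w_0\in L^2(\R)$ requires $\psi_0\in\dot H^{-1/2}(\R)$; this follows from the hypotheses $\psi_0\in\dot H^{-1}(\R)\cap L^2(\R)$ by combining the weight $|\xi|^{-2}$ at low frequency (from $\dot H^{-1}$) with the weight $1$ at high frequency (from $\psi_0\in H^\infty\subset L^2$). Since $|\p_x|^{1/2}w_0=\psi_0\in H^\infty\subset H^s=D(\cl)$ and $\cl\psi_0=0$, we get $\tL w_0=|\p_x|^{1/2}\cl\psi_0=0$; moreover the outer factor $|\p_x|^{-1/2}$ only improves high-frequency decay, so $w_0\in H^\infty\subset H^{s+1}$ and hence $w_0\in D(\tL)$. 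This gives $\Span\{|\p_x|^{-1/2}\psi_0\}\subseteq\ker(\tL)$.

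For the reverse inclusion I would take $w\in\ker(\tL)$ and set $g\coloneqq|\p_x|^{1/2}w$. Because $w\in D(\tL)\subset H^{s+1}(\R)$ we have $g\in H^{s+1/2}(\R)\subset H^s=D(\cl)$, so $\cl g\in L^2(\R)$; then the equation $\tL w=|\p_x|^{1/2}(\cl g)=0$ combined with injectivity of $|\p_x|^{1/2}$ on $L^2(\R)$ forces $\cl g=0$. Hence $g\in\ker(\cl)=\Span\{\psi_0\}$, say $g=c\psi_0$, and applying $|\p_x|^{-1/2}$ recovers $w=c\,|\p_x|^{-1/2}\psi_0$. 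Combining the two inclusions yields $\ker(\tL)=\Span\{|\p_x|^{-1/2}\psi_0\}$ and in particular $\dim[\ker(\tL)]=1$.

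The step requiring the most care is the handling of the zero-frequency singularity of the multipliers $|\p_x|^{\pm1/2}$: one must confirm that $|\p_x|^{-1/2}\psi_0$ is truly square-integrable rather than a mere tempered distribution, which is exactly where the hypothesis $\psi_0\in\dot H^{-1}(\R)$ is used, and one must justify that $|\p_x|^{-1/2}$ and $|\p_x|^{1/2}$ compose to the identity on the relevant subspaces even though each is singular at $\xi=0$. Once these frequency-space bookkeeping points are settled, the remainder is a routine composition of the isometries from \autoref{sec:2}.
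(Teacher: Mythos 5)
Your proof is correct, and its skeleton matches the paper's: first exhibit $|\p_x|^{-1/2}\psi_0$ as a kernel element (using $\psi_0\in\dot H^{-1}(\R)\cap H^\infty(\R)$ to make sense of it in $L^2(\R)$), then reduce $\tL u=0$ to $\calL|\p_x|^{1/2}u=0$ and quote $\ker(\calL)=\Span\{\psi_0\}$. Where you diverge is in how you peel off the outer factor $|\p_x|^{1/2}$. The paper tests $\tL u=0$ against $v\in H^{1/2}(\R)$, obtains $\langle\calL|\p_x|^{1/2}u,|\p_x|^{1/2}v\rangle=0$, and invokes the density of $\{|\p_x|^{1/2}v:v\in H^{1/2}(\R)\}$ in $L^2(\R)$ (\autoref{le:dense}) to conclude; you instead argue directly that $|\p_x|^{1/2}$ is injective on $L^2(\R)$ (since $|\xi|^{1/2}\hat h(\xi)=0$ a.e.\ forces $\hat h=0$ a.e.), which is precisely the fact the paper defers to in the proof of \autoref{l:gkertL}. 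The two mechanisms are essentially dual to one another, and both are sound. The paper's route has the minor advantage of working entirely at the level of the quadratic form $\rmq(f,g)=\langle\calL|\p_x|^{1/2}f,|\p_x|^{1/2}g\rangle$, which is how the Friedrich's extension $\tL$ is actually defined, so no identity for the pointwise action of $\tL$ on its domain needs to be assumed; your route is more elementary but implicitly uses that $\tL u=|\p_x|^{1/2}\calL|\p_x|^{1/2}u$ holds distributionally for $u\in D(\tL)\subset H^{s+1}(\R)$, which is legitimate given the paper's description of the domain but is a step the duality argument sidesteps. Your additional bookkeeping (that $\psi_0\in\dot H^{-1}\cap L^2$ implies $\psi_0\in\dot H^{-1/2}$, and that the compositions $|\p_x|^{\mp1/2}|\p_x|^{\pm1/2}$ act as the identity on the relevant spaces) is correct and slightly more explicit than the paper's.
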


\begin{proof}
Since $\psi_0\in\dot{H}^{-1}(\R)\cap H^\infty(\R)$, it is the case that
$|\partial_x|^{-1/2}\psi_0\in\dot{H}^{-1/2}(\R)\cap H^\infty(\R)$. Since
\[
\calL|\partial_x|^{1/2}\left(|\partial_x|^{-1/2}\psi_0\right)=\calL\psi_0=0,
\]
it is then clear  that $\dim[\ker(\tL)]\ge1$. In order to determine if the
kernel is any larger, consider $\tL u=0$ as an equality of $L^2(\R)$
functions. Testing this equation against all functions $v\in H^{1/2}(\R)$
yields
\[
\langle\tL u,v\rangle=\langle|\partial_x|^{1/2}\calL|\partial_x|^{1/2}u,v\rangle
=\langle\calL|\partial_x|^{1/2}u,|\partial_x|^{1/2}v\rangle=0.
\]
Because of the density \autoref{le:dense} we can rewrite the above as
\[
\langle\calL|\partial_x|^{1/2}u,w\rangle=0,\quad w\in L^2(\R).
\]
Consequently, it must be the case that (as an equality of $L^2(\R)$
functions)
\[
\calL|\partial_x|^{1/2}u=0\quad\Rightarrow\quad u=C|\partial_x|^{-1/2}\psi_0.
\]
The desired conclusion has now been achieved.
\end{proof}


Now that we see the kernel of the sandwiched operator is no larger than the
kernel of the original operator, the next thing to be understood is the
generalized kernel of $\calJ\tL,\,\gker(\calJ\tL)$.

\begin{lemma}\label{l:gkertL}
Suppose that $\psi_0\in\dot{H}^{-1}(\R)$ satisfies
\[
\langle\calL^{-1}(\partial_x^{-1}\psi_0),\partial_x^{-1}\psi_0\rangle\neq0.
\]
The generalized kernel is then given by
\[
\gker(\calJ\tL)=\Span\{|\partial_x|^{-1/2}\psi_0,
|\partial_x|^{-1/2}\calL^{-1}\partial_x^{-1}\psi_0\}.
\]
\end{lemma}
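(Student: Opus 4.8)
The plan is to exhibit a single Jordan chain of length two at the zero eigenvalue and to show that it cannot be extended. Write $\phi_0\coloneqq|\partial_x|^{-1/2}\psi_0$ for the generator of $\ker(\tL)$ produced by \autoref{l:kertL}. Since $\calJ$ is a bijective isometry on $L^2(\R)$ with $\calJ^{-1}=-\calJ$, one has $\ker(\calJ\tL)=\ker(\tL)=\Span\{\phi_0\}$; in particular the kernel is one-dimensional, so the generalized kernel is spanned by a single maximal Jordan chain and it suffices to follow the chain emanating from $\phi_0$.

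First I would show the chain has length at least two. Set $\phi_1\coloneqq|\partial_x|^{-1/2}\calL^{-1}\partial_x^{-1}\psi_0$. This is well-defined: $\psi_0\in\dot{H}^{-1}(\R)$ makes $\partial_x^{-1}\psi_0$ meaningful, and since $\partial_x^{-1}$ is skew-symmetric and $\psi_0$ is real, $\langle\partial_x^{-1}\psi_0,\psi_0\rangle=0$, so $\partial_x^{-1}\psi_0\perp\ker(\calL)$ and $\calL^{-1}\partial_x^{-1}\psi_0$ exists (unique modulo $\ker(\calL)=\Span\{\psi_0\}$, an ambiguity that only shifts $\phi_1$ by a multiple of $\phi_0$). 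Using $\calL|\partial_x|^{1/2}\phi_1=\calL\calL^{-1}\partial_x^{-1}\psi_0=\partial_x^{-1}\psi_0$ together with the Fourier-multiplier identities $\calJ=\partial_x|\partial_x|^{-1}$ and $|\partial_x|^{1/2}\partial_x^{-1}=-\partial_x|\partial_x|^{-3/2}$, a direct symbol computation gives $\tL\phi_1=|\partial_x|^{1/2}\partial_x^{-1}\psi_0=-\calJ\phi_0$. Applying $\calJ$ and using $\calJ^2=-\id$ then yields $\calJ\tL\phi_1=\phi_0$, so $\{\phi_0,\phi_1\}$ is a genuine Jordan chain (the mapping properties in \autoref{sec:2} confirm $\phi_1$ lies in the relevant space).

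Next I would prove that the chain terminates, i.e.\ there is no $\phi_2$ with $\calJ\tL\phi_2=\phi_1$. Because $\calJ^{-1}=-\calJ$, such a $\phi_2$ would solve $\tL\phi_2=-\calJ\phi_1$; but $\tL$ is self-adjoint, so $\range(\tL)\subseteq\ker(\tL)^\perp=\{\phi_0\}^\perp$, and it is enough to check that $\langle-\calJ\phi_1,\phi_0\rangle\neq0$. Since $|\partial_x|^{-1/2}$ and $\calJ$ are commuting Fourier multipliers (the first self-adjoint, the second skew-adjoint) and $\calJ|\partial_x|^{-1}=-\partial_x^{-1}$, I would compute
\[
\langle\calJ\phi_1,\phi_0\rangle
=\langle\calJ\calL^{-1}\partial_x^{-1}\psi_0,|\partial_x|^{-1}\psi_0\rangle
=-\langle\calL^{-1}\partial_x^{-1}\psi_0,\calJ|\partial_x|^{-1}\psi_0\rangle
=\langle\calL^{-1}\partial_x^{-1}\psi_0,\partial_x^{-1}\psi_0\rangle ,
\]
which is nonzero precisely by the standing hypothesis. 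Hence $-\calJ\phi_1\notin\range(\tL)$, so no $\phi_2$ exists and, equivalently, $\phi_1\notin\range(\calJ\tL)$.

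Finally I would assemble the conclusion. The analysis of the first step shows $\ker((\calJ\tL)^2)=\Span\{\phi_0,\phi_1\}$, and because $\phi_1\notin\range(\calJ\tL)$ a short induction shows the kernels $\ker((\calJ\tL)^k)$ do not grow beyond $k=2$ (any $v$ with $\calJ\tL v=a\phi_0+b\phi_1$ forces $b\phi_1\in\range(\calJ\tL)$, hence $b=0$). Therefore $\gker(\calJ\tL)=\Span\{\phi_0,\phi_1\}$, as claimed. I expect the main obstacle to be the second step: reducing non-extendability to a single Fredholm solvability condition for the self-adjoint $\tL$ and then identifying, via the multiplier identity $\calJ|\partial_x|^{-1}=-\partial_x^{-1}$, the resulting pairing with the hypothesized quantity $\langle\calL^{-1}\partial_x^{-1}\psi_0,\partial_x^{-1}\psi_0\rangle$, while keeping every pairing between bona fide $L^2(\R)$ elements.
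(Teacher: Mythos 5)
Your proof is correct and follows essentially the same route as the paper: you construct the same length-two Jordan chain $\{|\partial_x|^{-1/2}\psi_0,\,|\partial_x|^{-1/2}\calL^{-1}\partial_x^{-1}\psi_0\}$ and terminate it with the same orthogonality condition, which skew-symmetry identifies with the hypothesis $\langle\calL^{-1}\partial_x^{-1}\psi_0,\partial_x^{-1}\psi_0\rangle\neq0$. The only cosmetic difference is that the paper peels off the outer $|\partial_x|^{1/2}$ and invokes Fredholm solvability for $\calL$ (whose zero eigenvalue is isolated), whereas you verify the chain element directly and use only the necessary condition $\range(\tL)\subseteq\ker(\tL)^{\perp}$, which neatly sidesteps the paper's own caveat that Fredholm theory for $\tL$ itself is unavailable.
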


\begin{remark}
Since $\calL$ has a nontrivial kernel, it is not clear that the expression
$\calL^{-1}(\partial_x^{-1}\psi_0)$ is valid. Since $\partial_x^{-1}$ is a
skew-symmetric operator, it is the case that
$\langle\partial_x^{-1}\psi_0,\psi_0\rangle=0$. The fact that $\calL$ is
self-adjoint, and the additional fact that
$\partial_x^{-1}\in\ker(\calL)^\perp$, then tells us that the expression
makes sense.
\end{remark}

\begin{proof}
Since $\calJ$ has bounded inverse, we know from \autoref{l:kertL} that
$\ker(\calJ\tL)=\Span\{|\partial_x|^{-1/2}\psi_0\}$. The first element in the
generalized kernel is then found by solving
\[
\calJ\tL u=|\partial_x|^{-1/2}\psi_0\quad\Rightarrow\quad
\tL u=|\partial_x|^{1/2}\partial_x^{-1}\psi_0.
\]
Since $\psi_0\in\dot{H}^{-1}(\R)$, the expression on the right makes sense.
We would like to begin to use the Fredholm solvability theory at this point,
but unfortunately the fact that the origin is not necessarily isolated from
the (essential) spectrum of $\tL$ means that this is not possible. However,
the form of $\tL$ means that the above is equivalent to
\[
|\partial_x|^{1/2}\calL|\partial_x|^{1/2}u=|\partial_x|^{1/2}\partial_x^{-1}\psi_0\quad\Rightarrow
\quad\calL|\partial_x|^{1/2}u=\partial_x^{-1}\psi_0.
\]
The equality on the right follows from the fact that if $|\partial_x|^{1/2}
G=0$ for an $L^2$ function $G$ (in the sense of distributions), then $G=0$.
Since $\partial_x^{-1}\psi_0\in\ker(\calL)^\perp$, by the Fredholm
solvability theory the above has a solution. The second element in the Jordan
chain is given by
\[
u=|\partial_x|^{-1/2}\calL^{-1}\partial_x^{-1}\psi_0.
\]

The result is proven once it is shown that the Jordan chain is no longer.
Upon continuing we see that the next element in the Jordan chain, if it
exists, is found by solving
\[
\calJ\tL u=|\partial_x|^{-1/2}\calL^{-1}\partial_x^{-1}\psi_0\quad\Rightarrow\quad
\calL|\partial_x|^{1/2}u=\partial_x^{-1}\calL^{-1}\partial_x^{-1}\psi_0.
\]
The Fredholm solvability theory requires that
\[
0=\langle\partial_x^{-1}\calL^{-1}\partial_x^{-1}\psi_0,\psi_0\rangle
=-\langle\calL^{-1}\partial_x^{-1}\psi_0,\partial_x^{-1}\psi_0\rangle.
\]
By assumption this equality cannot hold, which completes the proof.
\end{proof}

Now that the structure of the kernel is well-understood (a crucial ingredient
in the index theories), we now turn to the problem of the negative index for
the operator $\tL$, say $\rmn(\tL)$. In general, we let $\rmn(\calS)$ denote
the number of negative eigenvalues (counting multiplicity) of the
self-adjoint operator $\calS$. Recalling the assumption (b) on the essential
spectrum of $\calL$, we have the following:

\begin{lemma}\label{prop:spec}
Assume that $\sigma_{\ess}(\tL)\setminus\subset [0,+\infty)$. If
$\rmn(\calL)<+\infty$, then $\rmn(\calL)=\rmn(\tL)$.
\end{lemma}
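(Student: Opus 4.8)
The plan is to count the negative directions of the quadratic form of $\tL$ rather than to diagonalize the operator directly, exploiting the congruence $\tL=|\partial_x|^{1/2}\calL|\partial_x|^{1/2}$ as an infinite-dimensional analogue of Sylvester's law of inertia. Since the hypothesis guarantees $\sigma_{\ess}(\tL)\subset[0,+\infty)$, the negative spectrum of $\tL$ is discrete, and by the min-max principle $\rmn(\tL)$ equals the largest dimension of a subspace on which $v\mapsto\langle\tL v,v\rangle$ is negative definite (and likewise for $\calL$, using assumption (b)). Everything rests on the identity
\[
\langle\tL v,v\rangle=\langle\calL|\partial_x|^{1/2}v,|\partial_x|^{1/2}v\rangle,
\]
which transports the form of $\tL$ at $v$ to the form of $\calL$ at $w=|\partial_x|^{1/2}v$.

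For the lower bound $\rmn(\tL)\ge\rmn(\calL)$, let $f_1,\dots,f_N$ be the normalized negative eigenvectors of $\calL$, with $\calL f_j=-\mu_j^2 f_j$; by assumption (a) they lie in $H^{1/2}(\R)$. I would like to use $v_j=|\partial_x|^{-1/2}f_j$, but $f_j$ need not lie in $\dot H^{-1/2}(\R)$, so $|\partial_x|^{-1/2}$ may not be applicable — this is the one genuinely delicate point. To get around it I would regularize with a Littlewood--Paley cutoff: set $f_j^\de=P_{\geq\de}f_j$ and $v_j^\de=|\partial_x|^{-1/2}f_j^\de$, which are well-defined elements of $\dot H^{-1/2}(\R)\cap H^\infty(\R)\subset D(\tL)$ once the low frequencies are excised. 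Because $f_j^\de\to f_j$ in $H^{1/2}(\R)$ as $\de\to0$, the matrix $\bigl(\langle\calL f_j^\de,f_k^\de\rangle\bigr)$ converges to $\diag(-\mu_1^2,\dots,-\mu_N^2)$ and hence is negative definite for all small $\de$. Since $|\partial_x|^{1/2}v_j^\de=f_j^\de$, the identity above gives $\langle\tL v_j^\de,v_k^\de\rangle=\langle\calL f_j^\de,f_k^\de\rangle$, so for small $\de$ the $N$-dimensional space $\Span\{v_1^\de,\dots,v_N^\de\}$ is negative definite for $\tL$, yielding $\rmn(\tL)\ge N$.

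For the upper bound $\rmn(\tL)\le\rmn(\calL)$, I would reuse the orthogonal decomposition already employed to construct the Friedrichs extension: writing $|\partial_x|^{1/2}v=h+\sum_{j}\langle v,|\partial_x|^{1/2}f_j\rangle f_j$ with $h\perp\Span\{f_1,\dots,f_N\}$ gives
\[
\langle\tL v,v\rangle=\langle\calL h,h\rangle-\sum_{j=1}^N\mu_j^2\langle v,|\partial_x|^{1/2}f_j\rangle^2.
\]
Restricting to the subspace $\{v:\langle v,|\partial_x|^{1/2}f_j\rangle=0,\ j=1,\dots,N\}$, of codimension at most $N$, kills the sum and forces $|\partial_x|^{1/2}v=h$, so that $\langle\tL v,v\rangle=\langle\calL h,h\rangle\ge0$ because $h$ is orthogonal to the entire negative subspace of $\calL$ and the rest of $\sigma(\calL)$ lies in $[0,+\infty)$ by assumption (b). A subspace of codimension $\le N$ on which the form is nonnegative bounds the negative index by $N$, so $\rmn(\tL)\le N$, and combining the two bounds gives $\rmn(\tL)=N=\rmn(\calL)$. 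I expect the low-frequency regularization in the lower bound to be the crux, since it is exactly where the failure of $|\partial_x|^{1/2}$ to be boundedly invertible — the central difficulty flagged in the introduction — must be absorbed, and where hypothesis (a) that the $f_j$ belong to $H^{1/2}(\R)$ is genuinely used.
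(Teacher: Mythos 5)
Your proof is correct, and one half of it takes a genuinely different route from the paper. Your upper bound $\rmn(\tL)\le\rmn(\calL)$ is essentially the paper's first step: restrict the form of $\tL$ to the orthogonal complement of $\Span\{|\partial_x|^{1/2}f_1,\dots,|\partial_x|^{1/2}f_N\}$ and invoke Courant's principle for $\calL$ (this is where hypothesis (a), $f_j\in H^{1/2}(\R)$, enters in both arguments). For the reverse inequality, however, the paper works with the negative eigenfunctions $f_1,\dots,f_M$ of $\tL$ rather than of $\calL$: it first checks that they lie in $\dot{H}^{-1/2}(\R)$ so that $g_j=|\partial_x|^{-1/2}f_j\in L^2(\R)$ makes sense, and then proves $\langle\calL g,g\rangle\ge0$ for every $g\perp\Span\{g_1,\dots,g_M\}$ by splitting $g=g_{>\ve}+g_{\le\ve}$, bounding all the low-frequency cross terms by $C\|g_{\le\ve}\|_{L^2}$, and letting $\ve\to0^+$. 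You instead exhibit an explicit $N$-dimensional negative subspace for $\tL$ by excising the low frequencies of the negative eigenfunctions of $\calL$ and pulling back by $|\partial_x|^{-1/2}$; the congruence of the two quadratic forms then reduces everything to convergence of the finite matrix $\bigl(\langle\calL f_j^\de,f_k^\de\rangle\bigr)$ to $\diag(-\mu_1^2,\dots,-\mu_N^2)$. Your version is shorter and avoids the $\ve$-bookkeeping entirely; the paper's version yields the reusable, slightly stronger fact that $\calL\ge0$ on the explicit codimension-$M$ subspace $\Span\{|\partial_x|^{-1/2}f_j\}^\perp$. Two small points to tighten: the convergence of the matrix entries is most cleanly justified by $f_j^\de\to f_j$ in $H^s(\R)=D(\calL)$, so that $\calL f_j^\de\to\calL f_j$ in $L^2(\R)$, rather than by convergence in $H^{1/2}(\R)$, since the form of $\calL$ need not be continuous in the $H^{1/2}$ topology; and it is worth noting that negative definiteness of the matrix already forces $v_1^\de,\dots,v_N^\de$ to be linearly independent, so the constructed subspace really has dimension $N$.
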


\begin{proof}
Let $\rmn(\calL)=N$, let
$-\lambda_N^2\le-\lambda_{N-1}^2\le\cdots\le-\lambda_1^2<0$ denote the
negative eigenvalues, and let $f_1,\dots,f_N$ denote the corresponding
eigenfunctions. As a consequence of the Courant max/min principle it is known
that
\[
f^\perp\in\Span\{f_1,\dots,f_N\}^\perp\quad\Rightarrow\quad\langle\calL f^\perp,f^\perp\rangle\ge0.
\]
Set $g_j=|\partial_x|^{1/2}f_j$, and let
$g^\perp\in\Span\{g_1,\dots,g_N\}^\perp$ be given. For each $j=1,\dots,N$ we
have
\[
\langle|\partial_x|^{1/2}g^\perp,f_j\rangle=
\langle g^\perp,g_j\rangle=0,
\]
so that $|\partial_x|^{1/2}g^\perp\in\Span\{f_1,\dots,f_N\}^\perp$.
Consequently, we have that
\[
\langle\calL|\partial_x|^{1/2}g^\perp,|\partial_x|^{1/2}g^\perp\rangle\ge0\quad\Rightarrow\quad
\langle\tL g^\perp,g^\perp\rangle\ge0.
\]
In other words, the negative subspace of $\tL$, i.e., the subspace of $\tL$
which corresponds to the negative eigenvalues of $\tL$, must be a subspace of
the negative subspace of $\calL$. In conclusion, we have that $\rmn(\tL)\le
N$.

Now that it is known that $\rmn(\tL)$ is finite, assume that $\rmn(\tL)=M$.
Equality of the two indices for $M=0$ follows immediately from
\autoref{le:dense}, so assume $M\geq 1$. We first show that all
eigenfunctions of $\tL$ corresponding to non-zero eigenvalues belong to
$\dot{H}^{-1/2}(\R)$. Indeed, let $\mu\neq 0$ be an eigenvalue, with
eigenfunction $f$, so that
\[
\mu f= \tL f=|\p_x|^{1/2}\left(\calL |\p_x|^{1/2} f\right).
\]
Since $|\partial_x|^{1/2}:L^2(\R)\mapsto\dot{H}^{-1/2}(\R)$ is an isometry,
the result now follows.

Next, let $f_1,\dots,f_M$ be the normalized eigenfunctions of $\tL$ which
correspond to the negative eigenvalues $-\mu_M^2\leq-\mu_{N-1}^2\le\cdots
\leq -\mu_1^2<0$. For $j=1,\dots,M$ set $g_j =|\p_x|^{-1/2} f_j\in L^2(\R)$
(in fact $\|g_j\|_{L^2}=\|f_j\|_{\dot{H}^{-1/2}}$)  and fix $g\in\Span\{g_1,
\ldots, g_M\}^\perp$ so that $\|g\|_{H^s}\leq 1$. For $0<\ve<<1$, we have
\[
\dpr{\calL g}{g}=\dpr{\calL g_{>\ve}}{g_{>\ve}}+2 \dpr{\calL g_{>\ve}}{g_{\leq \ve}}+
\dpr{\calL g_{\leq \ve}}{g_{\leq \ve}}.
\]
By using Cauchy-Schwartz the latter two terms can be bounded via
$$
2|\dpr{L g_{>\ve}}{g_{\leq \ve}}|+
|\dpr{L g_{\leq \ve}}{g_{\leq \ve}}|\leq C\|g\|_{H^s}\|g_{\leq \ve}\|_{L^2}=C \|g_{\leq \ve}\|_{L^2},
$$
where we have used that\footnote{Note that we assume that $\cl: D(\cl)\subset
H^s\to L^2$ and hence the estimate $\|\cl g\|_{L^2}\leq C\|g\|_{H^s}$} $\|\cl
g\|_{L^2}\leq C\|g\|_{H^s}$ Regarding the first term $\dpr{\calL
g_{>\ve}}{g_{>\ve}}$,  write
$$
\dpr{\calL g_{>\ve}}{g_{>\ve}}=\dpr{\calL |\p_x|^{1/2} |\p_x|^{-1/2} g_{>\ve}}{|\p_x|^{1/2} |\p_x|^{-1/2} g_{>\ve}}=\dpr{\tL |\p_x|^{-1/2} g_{>\ve}}{|\p_x|^{-1/2} g_{>\ve}}
$$
Projecting $|\p_x|^{-1/2} g_{>\ve}$ over the spectrum of $\tL$ yields
\[
|\p_x|^{-1/2} g_{>\ve}=h_\epsilon+\sum_{j=1}^M\langle|\p_x|^{-1/2} g_{>\ve},f_j\rangle f_j,
\]
where $\langle\tL h_\epsilon,h_\epsilon\rangle\ge0$ and
$h_\epsilon\in\Span\{f_1,\dots,f_M\}^\perp$. Since
$$
0=\dpr{g}{g_j}=\dpr{|\p_x|^{-1/2} f_j}{g_{>\ve}}+\dpr{g_j}{g_{\leq \ve}}=
\dpr{ f_j}{|\p_x|^{-1/2}  g_{>\ve}}+\dpr{g_j}{g_{\leq \ve}},
$$
we can rewrite the above expansion as
$$
|\p_x|^{-1/2}  g_{>\ve}=h_\ve- \sum_{j=1}^M \dpr{g_j}{g_{\leq \ve}} f_j.
$$
It then follows that
$$
\dpr{\tL |\p_x|^{-1/2} g_{>\ve}}{|\p_x|^{-1/2} g_{>\ve}}=
\dpr{\tL h_\ve}{h_\ve}-\sum_{j=1}^M \mu_j^2|\dpr{g_j}{g_{\leq \ve}}|^2.
$$
Using the definition of $\tL$ we can rewrite the above as
\[
\dpr{\calL g_{>\ve}}{g_{>\ve}}=
\dpr{\tL h_\ve}{h_\ve}-\sum_{j=1}^M \mu_j^2|\dpr{g_j}{g_{\leq \ve}}|^2.
\]
Again using Cauchy-Schwartz we have that $|\dpr{g_j}{g_{\leq \ve}}|\leq
\|g_j\|_{L^2} \|g_{\leq \ve}\|_{L^2}\leq C \|g_{\leq \ve}\|_{L^2}$, where \\  $C=\sup_{j\in [1,N]} \|f_j\|_{\dot{H}^{-1/2}}$.
It follows that
\[
\dpr{L g_{>\ve}}{g_{>\ve}}\geq \dpr{\tL h_\ve}{h_\ve}-C\|g_{\leq \ve}\|_{L^2}.
\]
In addition, note that by Cauchy-Schwartz
$$
\dpr{\cl g_{\leq \ve}}{ g_{\leq \ve}}\leq \| \cl g_{\leq \ve}\|_{L^2} \| g_{\leq \ve}\|_{L^2}\leq C \|g_{\leq \ve}\|_{H^s}\| g_{\leq \ve}\|_{L^2}\leq C \| g_{\leq \ve}\|_{L^2}^2.
$$
Putting everything together yields
\[
\dpr{L g}{g}\geq  -C\|g_{\leq \ve}\|_{L^2}(1+\|g_{\leq \ve}\|_{L^2}).
\]
Since $\epsilon>0$ is arbitrary and $\lim_{\epsilon\to 0}\|g_{\leq
\ve}\|_{L^2}=0$,  it must then be the case that $\dpr{\calL g}{g}\geq 0$.
This inequality implies that $\rmn(\calL)\leq M=\rmn(\tL)$. The proof is now
complete.
\end{proof}

\subsection{An example}

In the previous section, we have considered the theoretical relation between
the spectral properties of $\cl$ and $\tL$. We would like now to explore it
further for a specific example.

\begin{proposition}\label{prop:sch}
Let $V:\R\to \R$ be $C^1$ smooth and (sufficiently) decaying potential.
Consider the corresponding Schr\"odinger operator $\cl\coloneqq-\p_x^2+c -
V$, with $c>0$ and  $D(\cl)=H^2(\R)$. The sandwiched operator
$\tL\coloneqq|\p_x|^{1/2} \cl |\p_x|^{1/2}$ with domain $D(\tL)=H^3(\R)$ is
self-adjoint, and moreover $\si_{\ess}(\tL)=[0, \infty)$, while
$\rmn(\tL)=\rmn(\cl)<\infty$.
\end{proposition}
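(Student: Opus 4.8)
The plan is to realize $\tL$ as a lower-order perturbation of an explicit constant-coefficient multiplier. Since $\cl=(-\p_x^2+c)-V$, on the Fourier side one finds
\[
\tL=\tL_0+\calB,\qquad \tL_0\coloneqq|\p_x|^{1/2}(-\p_x^2+c)|\p_x|^{1/2}=|\p_x|^3+c|\p_x|,\qquad \calB\coloneqq-|\p_x|^{1/2}V|\p_x|^{1/2}.
\]
The operator $\tL_0$ is the Fourier multiplier with symbol $m(\xi)=(2\pi|\xi|)^3+c(2\pi|\xi|)\ge0$; it is self-adjoint on $H^3(\R)$, and because $m$ is continuous with $m(0)=0$, $m(\xi)\to+\infty$, and finite level sets, its spectrum is purely essential with $\si_{\ess}(\tL_0)=\overline{\mathrm{Range}(m)}=[0,+\infty)$. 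The whole proposition thus reduces to controlling $\calB$.

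First I would settle self-adjointness on $D(\tL)=H^3(\R)$ by showing $\calB$ is $\tL_0$-bounded with relative bound zero and appealing to Kato--Rellich. For $u\in H^3(\R)$, using that $|\p_x|^{1/2}$ is an isometry onto the relevant homogeneous space and that $V\in C^1$ is bounded with bounded derivative,
\[
\|\calB u\|_{L^2}\le C\|V\,|\p_x|^{1/2}u\|_{\dot H^{1/2}}\le C\|V\|_{W^{1,\infty}}\|u\|_{H^{3/2}}.
\]
Interpolating $\|u\|_{H^{3/2}}\le\|u\|_{L^2}^{1/2}\|u\|_{H^3}^{1/2}$ and using $\|u\|_{H^3}\le C(\|\tL_0u\|_{L^2}+\|u\|_{L^2})$ yields, for each $\ve>0$, an estimate $\|\calB u\|_{L^2}\le\ve\|\tL_0u\|_{L^2}+C_\ve\|u\|_{L^2}$. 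Hence $\tL=\tL_0+\calB$ is self-adjoint with domain $H^3(\R)$, as claimed.

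The heart of the matter is the essential spectrum, and this is the step I expect to require the most care. I would prove that the resolvent difference $(\tL-z)^{-1}-(\tL_0-z)^{-1}$ is compact for $z\notin[0,+\infty)$ and then invoke Weyl's theorem to conclude $\si_{\ess}(\tL)=\si_{\ess}(\tL_0)=[0,+\infty)$. Since $\calB$ is $\tL_0$-bounded, the second resolvent identity is rigorous and gives
\[
(\tL-z)^{-1}-(\tL_0-z)^{-1}=(\tL-z)^{-1}|\p_x|^{1/2}\cdot\big(V\,|\p_x|^{1/2}(\tL_0-z)^{-1}\big).
\]
The outer factor $(\tL-z)^{-1}|\p_x|^{1/2}$ is bounded: its adjoint $|\p_x|^{1/2}(\tL-\bar z)^{-1}$ maps $L^2(\R)$ into $H^{5/2}(\R)$ because $(\tL-\bar z)^{-1}:L^2(\R)\to H^3(\R)$. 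The inner factor is $V(x)\,b(\p_x)$, where $b(\xi)=(2\pi|\xi|)^{1/2}/(m(\xi)-z)$ lies in $L^2(\rmd\xi)$; writing $G$ for the $L^2$ convolution kernel of $b(\p_x)$ and using that $V$ is decaying (so $V\in L^2(\R)$), the kernel $V(x)G(x-y)$ is square integrable, whence $V(x)b(\p_x)$ is Hilbert--Schmidt and in particular compact. A bounded operator times a compact operator is compact, so the resolvent difference is compact. The genuine obstacle here is structural rather than computational: because the sandwiching factors $|\p_x|^{1/2}$ are unbounded, they cannot simply be grouped with the decaying multiplier $b$, and one must carefully distribute the two stray half-derivatives onto the two resolvents (each smoothing by three derivatives, which is more than enough) so that the remaining operator has the product-of-vanishing-symbols form $V(x)b(\p_x)$ to which the compactness criterion applies.

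Finally, the index identity follows with no further work. The Schr\"odinger operator $\cl=-\p_x^2+c-V$ has $\si_{\ess}(\cl)=[c,+\infty)\subset(0,+\infty)$ and, for sufficiently decaying $V$, only finitely many negative eigenvalues, with exponentially decaying (hence $H^{1/2}(\R)$) eigenfunctions, so $\rmn(\cl)<+\infty$. Having established $\si_{\ess}(\tL)=[0,+\infty)\subset[0,+\infty)$, the hypotheses of \autoref{prop:spec} are met, and that lemma delivers $\rmn(\tL)=\rmn(\cl)$, completing the proof.
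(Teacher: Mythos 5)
Your proof is correct, and at the decisive step it takes a genuinely different route from the paper. Both arguments split $\tL$ into a nonnegative constant-coefficient multiplier plus the sandwiched potential and then appeal to a Weyl-type theorem; your bookkeeping of the free part is actually the tidier one, since you keep the harmless $c|\p_x|$ term inside $\tL_0$ (the paper's displayed decomposition $\tL_0=|\p_x|^3$, $\calK^\diamond=|\p_x|^{1/2}V|\p_x|^{1/2}$ silently drops it and carries a sign slip; none of this affects $\si_{\ess}(\tL_0)=[0,\infty)$). The divergence is in how compactness is certified. The paper proves that the perturbation is \emph{relatively compact}, i.e.\ that $|\p_x|^{1/2}V|\p_x|^{1/2}(\rmi+|\p_x|^3)^{-1}$ is itself compact on $L^2(\R)$, via a Rellich-type criterion consisting of the smoothing estimate \eref{50} together with the pointwise decay bound \eref{60}; the latter costs a Littlewood--Paley decomposition and an integration-by-parts analysis. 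You instead prove compactness of the resolvent difference: the second resolvent identity lets you park one stray half-derivative on each of the two resolvents (each of which smooths by three derivatives, far more than needed), leaving the operator $V(x)\,b(\p_x)$ with $V,b\in L^2(\R)$, which is Hilbert--Schmidt by the classical $f(x)g(-\rmi\p_x)$ criterion. Your route bypasses the decay estimate entirely and is shorter; the paper's route establishes the formally stronger relative-compactness statement and stays at the level of explicit kernel estimates. Your separate Kato--Rellich verification of self-adjointness (relative bound zero via interpolation through $H^{3/2}$) is sound, where the paper extracts self-adjointness from the same Reed--Simon corollary that handles the essential spectrum. The final transfer $\rmn(\tL)=\rmn(\cl)$ via \autoref{prop:spec} is common to both arguments.
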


\begin{proof}
One could argue that the Friedrich's extension of $\tL$ is self-adjoint,
after which, one  will need to identify the domain as $H^3(\R)$. We will
instead follow a more direct route in constructing a self-adjoint extension
of the symmetric operator $\tL$. To that end, let
$$
\tL=|\p_x|^{1/2} \cl |\p_x|^{1/2} = -|\p_x|\p_x^2+|\p_x|^{1/2} V |\p_x|^{1/2}=
|\p_x|^3+ |\p_x|^{1/2} V |\p_x|^{1/2}=:\tL_0+\calK^\diamond,
$$
where $D(\tL_0)=H^3(\R),\,D(\calK^\diamond)=H^1(\R)$. Clearly, with these
assignments, $\tL_0$ is self-adjoint, while $\calK^\diamond$ is a symmetric
operator. Furthermore, $\si(\tL_0)=\si_{\ess}(\tL_0)=[0, \infty)$.

According to  the Weyl's essential spectrum theorem (and more specifically
\citep[Corollary 2, page 113]{reed:aoo78}), we may conclude that
$\tL=\tL_0+\calK^\diamond$ is self-adjoint and $\si_{ess}(\tL)
=\si_{\ess}(\tL_0)=[0, \infty)$ provided we can establish that
$\calK^\diamond$ is a relatively compact perturbation of $\tL_0$. This
amounts to showing that
$$
|\p_x|^{1/2} V |\p_x|^{1/2} (\rmi+|\p_x|^3)^{-1}:L^2(\R)\mapsto L^2(\R)\,\,\mathrm{is\,\,compact}.
$$
Since it is clear that $|\p_x|^{1/2} V |\p_x|^{1/2} (\rmi+|\p_x|^3)^{-1}\in
B(L^2(\R))$, it will suffice (by Relich's criteria) to establish
\begin{eqnarray}
\label{50}
& & |\p_x|^{1/2} V |\p_x|^{1/2} (i+|\p_x|^3)^{-1}: L^2(\R)\to \dot{H}^{1/2}(\R) \\
\label{60}
& &  ||\p_x|^{1/2} V |\p_x|^{1/2} (\rmi+|\p_x|^3)^{-1} f (x)| \leq  \f{C}{\sqrt{|x|}}
\|f\|_{L^2},\quad |x|>>1.
\end{eqnarray}

For the proof of \eref{50}, we have
\[
\begin{split}
\||\p_x|^{1/2} V |\p_x|^{1/2} (\rmi+|\p_x|^3)^{-1} f\|_{\dot{H}^{1/2}(\R)} &=
\|\p_x[V |\p_x|^{1/2} (\rmi+|\p_x|^3)^{-1} f]\|_{L^2(\R)}\\
&\leq \|V'\|_{L^\infty}
\| |\p_x|^{1/2} (\rmi+|\p_x|^3)^{-1} f\|_{L^2}+ \\
&\qquad\|V\|_{L^\infty} \||\p_x|^{3/2} (\rmi+|\p_x|^3)^{-1} f\|_{L^2}\\
&\leq C(\|V'\|_{L^\infty}+ \|V\|_{L^\infty})\|f\|_{L^2}.
\end{split}
\]
For the  proof of \eref{60}, first fix $x$ with $|x|>>1$. Denoting
$h(x)=|\p_x|^{1/2} (\rmi+|\p_x|^3)^{-1} f$ and $G(x)=V(x) h(x)$,  write
\[
\begin{split}
|\p_x|^{1/2} V |\p_x|^{1/2} (\rmi+|\p_x|^3)^{-1} f (x)&=
\sqrt{2\pi} \int |\xi|^{1/2} \hat{G}(\xi)\rme^{2\pi\rmi x \xi}\, \rmd x\\
&= \sqrt{2\pi} \sum_{k=-\infty}^\infty 2^{k/2} \int \tilde{\vp}(2^{-k} \xi)
\hat{G}(\xi)\rme^{2\pi\rmi x \xi}\,\rmd x,
\end{split}
\]
where $\tilde{\vp}(z)=|z|^{1/2} \vp(z)$. For the portion of the sum $k$ with
$2^k <|x|^{-1}$, we have
$$
\sum_{k:2^k <|x|^{-1} } 2^{k/2} \int \tilde{\vp}(2^{-k} \xi) \hat{G}(\xi)\rme^{2\pi\rmi x \xi}\,\rmd x
\leq C|x|^{-1/2} \sup_k \| G_{\sim 2^k}\|_{L^\infty}\leq C |x|^{-1/2}\|G\|_{L^\infty}\leq C |x|^{-1/2} \|V\|_{L^\infty} \|h\|_{L^\infty}.
$$
By Sobolev embedding, for any $m>1/2$, $ \|h\|_{L^\infty}\leq C_m
\|h\|_{H^{m}} \leq C \|f\|_{L^2}. $ Regarding the case of $k$ with $2^k \geq
|x|^{-1}$ we integrate by parts to get
$$
\int \tilde{\vp}(2^{-k} \xi) \hat{G}(\xi)\rme^{2\pi\rmi x \xi}\,\rmd x=\f{\rmi}{2\pi x}
\left( 2^{-k} \int  \tilde{\vp}'(2^{-k} \xi) \hat{G}(\xi)\rme^{2\pi\rmi x \xi}\,\rmd x+
\int \tilde{\vp}(2^{-k} \xi) \hat{G}'(\xi)\rme^{2\pi\rmi x \xi}\,\rmd x\right).
$$
The first term is estimated via
\begin{eqnarray*}
\left|\sum_{k: 2^k \geq |x|^{-1}} \f{2^{-k/2}}{2\pi x} \int  \tilde{\vp}'(2^{-k} \xi) \hat{G}(\xi)
\rme^{2\pi\rmi x \xi}\,\rmd x\right|\leq C |x|^{-1/2} \sup_k \| G_{\sim 2^k}\|_{L^\infty},
\end{eqnarray*}
whence the estimate finishes as the one a few lines up. Finally, upon
introducing the smooth function $\Phi(z)=\tilde{\vp}(z)/z$, rewrite
\[
\int \tilde{\vp}(2^{-k} \xi) \hat{G}'(\xi)\rme^{2\pi\rmi x \xi}\,\rmd x=2^{-k} \int
\Phi(2^{-k} \xi) [\xi \hat{G}'(\xi)]\rme^{2\pi\rmi x \xi}\,\rmd x,
\]
so that
$$
\left|\sum_{k: 2^k \geq |x|^{-1}} \f{2^{-k/2}}{2\pi x} \int \Phi(2^{-k} \xi) [\xi \hat{G}'(\xi)]
\rme^{2\pi\rmi x \xi}\,\rmd x\right| \leq C |x|^{-1/2}
 \sup_k \|H_{\sim 2^k}\|_{L^\infty}\leq  C |x|^{-1/2}
   \|H\|_{L^\infty}
$$
where $H$ is defined through its Fourier transform, $\hat{H}(\xi)\coloneqq\xi
\hat{G}'(\xi)$. From the formula for $H$ one can easily identify it; namely,
$H(x)=c_0 \p_x( x V(x) h(x))$ for some constant $c_0$. Finally, by H\"older's
inequality and Sobolev embedding we conclude with
$$
\|H\|_{L^\infty}\leq C(\|x V(x)\|_{L^\infty}+ \|x V'(x)\|_{L^\infty}+ \|V\|_{L^\infty})
(\|h\|_{L^\infty}+ \|h'\|_{L^\infty})\leq C_V \|f\|_{L^2}.\qedhere
$$
\end{proof}

\section{The Hamiltonian-Krein index theorem}\label{s:hki}

  We are now ready to apply the instability index formula of
\citep{kapitula:cev04,kapitula:ace05}. For the reformulated eigenvalue
problem \eref{20} let $k_\rmr$ denote the number of real-valued and positive
eigenvalues (counting multiplicity), and let $k_\rmc$ be the number of
complex-valued eigenvalues with positive real part. Since the imaginary part
of $\calL$ satisfies $\Im(\calL)=0$, it is the case that $k_\rmc$ is an even
integer. Finally, for (potentially embedded) purely imaginary eigenvalues
$\lambda\in\rmi\R$, let $E_\lambda$ denote the corresponding eigenspace. The
negative Krein index of the eigenvalue is given by
\[
k_\rmi^-(\lambda)=\rmn(\langle\tL|_{E_\lambda}u,u\rangle),
\]
and the total negative Krein index is given by
\[
k_\rmi^-=\sum_{\lambda\in\rmi\R} k_\rmi^-(\lambda).
\]
Here we use the notation $\calS|_E=P_E\calS P_E$, where $P_E$ is the
orthogonal projection onto the subspace $E$. It is the case that $k_\rmi^-$
is also an even integer. The Hamiltonian-Krein index is defined by
\[
K_{\Ham}\coloneqq k_\rmr+k_\rmc+k_\rmi:
\]
the index counts the total number of (potentially) unstable eigenvalues.

\begin{remark}
The negative Krein index of the operator is defined in terms of the the
eigenvalue problem \eref{20}. Using the definition of $\tL$ it can be
rewritten as
\[
k_\rmi^-(\lambda)=
\rmn(\langle\calL|_{E_\lambda}|\partial_x|^{1/2}u,|\partial_x|^{1/2}u\rangle).
\]
In terms of the original eigenvalue problem \eref{5}, upon using the
transformation that moved the first to the second yields that in terms of the
original variables, the negative Krein index can be rewritten as
\[
k_\rmi^-(\lambda)=\rmn(\langle\calL|_{E_\lambda}u,u\rangle).
\]
This is the expected definition, and the one that would have been used if the
operator $\partial_x$ had bounded inverse.
\end{remark}

For the eigenvalue problem \eref{20} the previous index theory relates
$K_{\Ham}$ to the finite number $\rmn(\tL|_{S^\perp})$, where $S$ is some
finite-dimensional subspace. In order to apply this theory, however, it is
not only necessary that $\calJ$ have bounded inverse, but that
$\sigma_{\ess}(\tL)$ be uniformly bounded away from the origin.
Unfortunately, this technical assumption is no longer necessarily valid, as
in the applications it will generically be the case that the essential
spectrum touches the origin, i.e., $\sigma_{\ess}(\tL)=[0,+\infty)$ (e.g.,
see \autoref{prop:sch}). We overcome this technical difficulty with the
following argument.


For $0<\epsilon\ll1$ consider the sandwiched operator
\[
\tL_\epsilon\coloneqq(-\partial_x^2+\epsilon^2)^{1/4}
\calL(-\partial_x^2+\epsilon^2)^{1/4}.
\]
Note that $\tL=\tL_0$, and moreover $\tL_\epsilon\to\tL$ as $\epsilon\to0^+$
in the weak operator topology. That is, for each pair of  test functions
$\chi, \psi$,
\begin{equation}
\label{o:1}
\lim_{\epsilon\to 0} \dpr{\tL_\epsilon \chi}{\psi}= \dpr{\tL  \chi}{\psi}
\end{equation}
Since the operator $(-\partial_x^2+\epsilon^2)^{1/4}$ is invertible for any
$\epsilon>0$,\footnote{In fact, one might argue as in \autoref{prop:spec}
that for each $\epsilon>0,\,\rmn(\tL_\epsilon)=\rmn(\cl)$}
\[
\rmn(\tL_\epsilon)=\rmn(\calL);
\]
thus, by using \autoref{prop:spec} we have the double equality
\begin{equation}\label{e:340}
\rmn(\tL_\epsilon)=\rmn(\calL)=\rmn(\tL).
\end{equation}
Regarding the essential spectrum, we would like to say that it is pushed off
the origin and becomes $\sigma_{\ess}(\tL_\epsilon)\subseteq [\ka^2
\epsilon,+\infty)$. This is indeed the case, but it needs some justification.
\begin{proposition}
\label{prop:109} The essential spectrum of the sandwiched operator satisfies
$$
\sigma_{\ess}(\tL_\epsilon)\subseteq [\ka^2 \epsilon,+\infty).
$$
\end{proposition}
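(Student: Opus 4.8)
The plan is to treat $\tL_\epsilon$ as a lower-order (relatively compact) perturbation of its constant-coefficient part and to invoke Weyl's essential spectrum theorem, exactly in the spirit of \autoref{prop:sch}. Write $M_\epsilon\coloneqq(-\p_x^2+\epsilon^2)^{1/4}$, so that $\tL_\epsilon=M_\epsilon\calL M_\epsilon=\tL_{0,\epsilon}+\calK_\epsilon^\diamond$, where $\tL_{0,\epsilon}\coloneqq M_\epsilon\calL_0 M_\epsilon$ and $\calK_\epsilon^\diamond\coloneqq M_\epsilon\calK M_\epsilon$. The operator $\tL_{0,\epsilon}$ is a Fourier multiplier with symbol $q_\epsilon(\xi)=(4\pi^2\xi^2+\epsilon^2)^{1/2}q(\xi)$, and since $q(\xi)\geq\ka^2$ and $(4\pi^2\xi^2+\epsilon^2)^{1/2}\geq\epsilon$ we get $q_\epsilon(\xi)\geq\ka^2\epsilon$; equivalently, as quadratic forms, $\dpr{\tL_{0,\epsilon}f}{f}=\dpr{\calL_0 M_\epsilon f}{M_\epsilon f}\geq\ka^2\|M_\epsilon f\|_{L^2}^2\geq\ka^2\epsilon\|f\|_{L^2}^2$. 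Hence $\sigma(\tL_{0,\epsilon})=\sigma_{\ess}(\tL_{0,\epsilon})\subseteq[\ka^2\epsilon,+\infty)$, and it remains only to show $\sigma_{\ess}(\tL_\epsilon)=\sigma_{\ess}(\tL_{0,\epsilon})$.

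By the second resolvent identity it suffices to prove that
\[
(\tL_\epsilon+\rmi)^{-1}-(\tL_{0,\epsilon}+\rmi)^{-1}=-(\tL_\epsilon+\rmi)^{-1}\,\calK_\epsilon^\diamond\,(\tL_{0,\epsilon}+\rmi)^{-1}
\]
is compact, after which Weyl's theorem \citep[Corollary 2, page 113]{reed:aoo78} yields the desired identity of essential spectra. Because $M_\epsilon$, $\calL_0$, and $\tL_{0,\epsilon}$ are all Fourier multipliers they commute, and I factor
\[
\calK_\epsilon^\diamond(\tL_{0,\epsilon}+\rmi)^{-1}=M_\epsilon\,\calK\,A_\epsilon,\qquad A_\epsilon\coloneqq M_\epsilon(\tL_{0,\epsilon}+\rmi)^{-1}.
\]
Here $A_\epsilon$ is the Fourier multiplier with symbol $(4\pi^2\xi^2+\epsilon^2)^{1/4}(q_\epsilon(\xi)+\rmi)^{-1}$, which is bounded by $q\geq\ka^2$. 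Setting $R_0\coloneqq(\calL_0+\rmi)^{-1}$ and $C_\epsilon\coloneqq(\calL_0+\rmi)A_\epsilon$ (again a Fourier multiplier), one checks from $q\geq\ka^2$ that the symbol of $C_\epsilon$, namely $(q(\xi)+\rmi)(4\pi^2\xi^2+\epsilon^2)^{1/4}(q_\epsilon(\xi)+\rmi)^{-1}$, is uniformly bounded, so $C_\epsilon$ is bounded. Therefore $\calK A_\epsilon=(\calK R_0)\,C_\epsilon$, and since $\calK R_0$ is compact — this is the relative compactness of $\calK$ with respect to $\calL_0$ — the product $\calK A_\epsilon$ is compact.

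It remains to absorb the outermost factor $M_\epsilon$. Since $\tL_\epsilon$ is strongly elliptic of order $s+1$, its domain satisfies $D(\tL_\epsilon)\subseteq H^{1/2}(\R)$ (compare \autoref{prop:sch}, where $D(\tL_\epsilon)=H^3$), so $M_\epsilon(\tL_\epsilon-\rmi)^{-1}$ is bounded on $L^2(\R)$, and hence so is its adjoint $(\tL_\epsilon+\rmi)^{-1}M_\epsilon$. Consequently
\[
(\tL_\epsilon+\rmi)^{-1}\calK_\epsilon^\diamond(\tL_{0,\epsilon}+\rmi)^{-1}=\big[(\tL_\epsilon+\rmi)^{-1}M_\epsilon\big]\,(\calK A_\epsilon)
\]
is a bounded operator composed with the compact operator $\calK A_\epsilon$, hence compact. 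This gives compactness of the resolvent difference, and therefore $\sigma_{\ess}(\tL_\epsilon)=\sigma_{\ess}(\tL_{0,\epsilon})\subseteq[\ka^2\epsilon,+\infty)$.

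The main obstacle is precisely the two unbounded factors $M_\epsilon$ appearing in $\calK_\epsilon^\diamond$: the relative compactness of $\calK$ with respect to $\calL_0$ cannot be quoted directly, and the crux is to verify that these half-order factors are absorbed by the smoothing of $(\tL_{0,\epsilon}+\rmi)^{-1}$. This is exactly what the $\epsilon$-uniform boundedness of the multipliers $A_\epsilon$ and $C_\epsilon$ (via $q\geq\ka^2$), together with the domain identification $D(\tL_\epsilon)\subseteq H^{1/2}$, accomplish. If one prefers to avoid the domain identification, the same conclusion follows from a form version of Weyl's theorem: the symmetric factorization $(\tL_{0,\epsilon}+1)^{-1/2}\calK_\epsilon^\diamond(\tL_{0,\epsilon}+1)^{-1/2}=B_\epsilon\calK B_\epsilon$ with $B_\epsilon\coloneqq M_\epsilon(\tL_{0,\epsilon}+1)^{-1/2}$ bounded, combined with the compactness of $(\calL_0+1)^{-1/2}\calK(\calL_0+1)^{-1/2}$, exhibits $\calK_\epsilon^\diamond$ as a compact perturbation at the level of forms. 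Either way, the heart of the matter is the symbol estimate showing that $\calK_\epsilon^\diamond$ is of strictly lower order than $\tL_{0,\epsilon}$.
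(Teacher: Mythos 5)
Your proof is correct and follows the same overall strategy as the paper: the same splitting $\tL_\epsilon=M_\epsilon\calL_0M_\epsilon+M_\epsilon\calK M_\epsilon$, the same Fourier computation giving $\sigma(\tL_{0,\epsilon})\subseteq[\ka^2\epsilon,\infty)$ from $q\geq\ka^2$, and the same reduction to compactness of the resolvent difference plus Weyl's theorem. The one place where you genuinely diverge is the mechanism for absorbing the two unbounded factors $M_\epsilon$ in $\calK_\epsilon^\diamond$. The paper's trick is to sandwich symmetrically: it observes that
\[
\tL_{0,\epsilon}^{-1}\,\calK_\epsilon^\diamond\,\tL_{0,\epsilon}^{-1}
=M_\epsilon^{-1}\calL_0^{-1}\calK\calL_0^{-1}M_\epsilon^{-1},
\]
where the inner $M_\epsilon^{\pm1}$ factors cancel exactly, so compactness follows immediately from that of $\calK\calL_0^{-1}$; it then massages $(\tL_\epsilon+\rmi)^{-1}\calK_\epsilon^\diamond(\tL_{0,\epsilon}+\rmi)^{-1}$ into bounded operators composed with this compact core via repeated resolvent identities. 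You instead treat the two factors asymmetrically: the right-hand $M_\epsilon$ is absorbed by explicit symbol bounds on the Fourier multipliers $A_\epsilon$ and $C_\epsilon$ (which is clean and quantitative, at the cost of constants that degenerate like $\epsilon^{-1/2}$ --- harmless here since $\epsilon>0$ is fixed), while the left-hand $M_\epsilon$ is absorbed by the domain identification $D(\tL_\epsilon)\subseteq H^{1/2}$ together with a closed-graph/adjoint argument. That last step is the only soft spot: the inclusion $D(\tL_\epsilon)\subseteq H^{1/2}$ is not proved from scratch, though it does follow from the paper's own Friedrichs-extension domain statement ($D(\tL)\subseteq H^{s+1}$ with $s\geq0$, or even just from the form domain being contained in $H^{(s+1)/2}$), so it is a legitimate appeal rather than a gap. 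The paper's symmetric cancellation avoids any domain discussion, which is what it buys; your version makes the order-counting (``$\calK_\epsilon^\diamond$ is strictly lower order than $\tL_{0,\epsilon}$'') more transparent, and your closing remark about running the argument at the level of forms is a valid way to dispense with the domain identification entirely.
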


\begin{proof}
Recall that by assumption $\cl=\calL_0+K$,  where $\calK$ is relatively
compact perturbation of $\calL_0\geq \ka^2\calI$. In particular,  $\calL_0$
is invertible and $\calK\calL_0^{-1}$ is a compact operator.  Denote
$$
\tilde{\calL}_\epsilon=(-\partial_x^2+\epsilon^2)^{1/4}\calL_0 (-\partial_x^2+\epsilon^2)^{1/4}; \ \
\tilde{\calK}_\epsilon=(-\partial_x^2+\epsilon^2)^{1/4}
\calK (-\partial_x^2+\epsilon^2)^{1/4},
$$
so that $\tL_\epsilon= \tilde{\calL}_\epsilon+\tilde{\calK}_\epsilon$. By
applying the Fourier transform, we see that
$$
\si(\tilde{\calL}_\epsilon)=\si_{\ess}(\tilde{\calL}_\epsilon)=\mathrm{Range}[\xi\mapsto (4\pi^2\xi^2+\epsilon^2)^{1/4} q(\xi) (4\pi^2\xi^2+\epsilon^2)^{1/4}]\subseteq [\ka^2\epsilon, \infty),
$$
since by assumption $q(\xi)\geq \ka^2$. Thus,  it remains to show
\begin{equation}
\label{o:2}
 \si_{\ess}(\tilde{\calL}_\epsilon+\tilde{\calK}_\epsilon)= \si_{\ess}(\tilde{\calL}_\epsilon).
\end{equation}
In order to show \eref{o:2}, we will verify that
$(\tilde{\calL}_\epsilon+\tilde{\calK}_\epsilon+\rmi)^{-1}-(\tilde{\calL}_\epsilon+\rmi)^{-1}$
is a compact operator, whence the result will follow from a standard lemma in
spectral theory \cite[Corollary 1,p. 113]{reed:aoo78}. Indeed,
$$
\tilde{\calL}_\epsilon^{-1} \tilde{\calK}_\epsilon \tilde{\calL}_\epsilon^{-1}=
(-\p_x^2+\epsilon^2)^{-1/4}\calL_0^{-1}\calK\calL_0^{-1} (-\p_x^2+\epsilon^2)^{-1/4}
$$
is compact, since $\calK\calL_0^{-1}$ is compact by assumption. A multiple
application of the  resolvent identity yields the formula
$$
(\tilde{\calL}_\epsilon+\tilde{\calK}_\epsilon+\rmi)^{-1} \tilde{\calK}_\epsilon (\tilde{\calL}_\epsilon+\rmi)^{-1}=
[(\tilde{\calL}_\epsilon^{-1}-
(\tilde{\calL}_\epsilon+\tilde{\calK}_\epsilon+\rmi)^{-1}(\tilde{\calK}_\epsilon+\rmi)\tilde{\calL}_\epsilon^{-1}] \tilde{\calK}_\epsilon[(\tilde{\calL}_\epsilon^{-1}+\rmi \tilde{\calL}_\epsilon^{-1}
(\tilde{\calL}_\epsilon+\rmi)^{-1}],
$$
which implies that $(\tilde{\calL}_\epsilon+\tilde{\calK}_\epsilon+\rmi)^{-1}
\tilde{\calK}_\epsilon (\tilde{\calL}_\epsilon+\rmi)^{-1}$ is compact as
well. From the resolvent identity again,
$$
(\tilde{\calL}_\epsilon+\tilde{\calK}_\epsilon+\rmi)^{-1}-(\tilde{\calL}_\epsilon+\rmi)^{-1}=
-(\tilde{\calL}_\epsilon+\tilde{\calK}_\epsilon+\rmi)^{-1} \tilde{\calK}_\epsilon (\tilde{\calL}_\epsilon+\rmi)^{-1},
$$
which implies the compactness of
$(\tilde{\calL}_\epsilon+\tilde{\calK}_\epsilon+\rmi)^{-1}-(\tilde{\calL}_\epsilon+\rmi)^{-1}$,
and hence \eref{o:2} is established.
\end{proof}

For Hamiltonian eigenvalue problems the Hamiltonian-Krein index theory can be
applied if the skew-operator has a bounded inverse, and if the self-adjoint
operator (a) has a finite number of negative eigenvalues, and (b) has an
essential spectrum which is bounded away from the origin. Thus, this theory
is applicable for the operator $\calJ\tL_\epsilon$ for any $\epsilon>0$. We
now wish to establish the index for the one-parameter family of eigenvalue
problems given by
\begin{equation}\label{e:341}
\calJ\tL_\epsilon u=\lambda u\quad\Rightarrow\quad\tL_\epsilon u=\lambda\calJ^{-1}u.
\end{equation}
Afterwards, we will use a limiting argument to establish the index for the
original problem \eref{5}.

Since $\ker(\tL_\epsilon)=\Span\{(-\partial_x^2+\epsilon^2)^{-1/4}\psi_0\}$,
and since $\tL_\epsilon$ is self-adjoint, for $\lambda\neq0$ the eigenvalue
problem \eref{e:341} can by the Fredholm alternative have a solution if and
only if
\[
\langle\calJ^{-1}u,(-\partial_x^2+\epsilon^2)^{-1/4}\psi_0\rangle=0\quad\Rightarrow\quad
\langle u,(-\partial_x^2+\epsilon^2)^{-1/4}|\partial_x|\partial_x^{-1}\psi_0\rangle=0.
\]
Thus, upon setting
$S_\epsilon\coloneqq\Span\{(-\partial_x^2+\epsilon^2)^{-1/4}|\partial_x|\partial_x^{-1}\psi_0\}$,
the search for nonzero eigenvalues is accomplished by considering the
constrained problem
\begin{equation}\label{e:342}
\calJ\tL_\epsilon u=\lambda u,\quad u\in S_\epsilon^\perp.
\end{equation}

As a consequence of \citep{kapitula:cev04,kapitula:ace05} it is true that for
$\epsilon>0$ the Hamiltonian-Krein index associated with the eigenvalue
problem \eref{e:342} satisfies
\[
K_{\Ham}^\epsilon=\rmn(\tL_\epsilon|_{S^\perp}).
\]
Using, e.g., \citep{kapitula:sif12}, we have that the negative index on the
right satisfies
\[
\begin{split}
\rmn(\tL_\epsilon|_{S^\perp})&=\rmn(\tL_\epsilon)-
\rmn\left(\langle(\tL_\epsilon)^{-1}(-\partial_x^2+\epsilon^2)^{-1/4}|\partial_x|\partial_x^{-1}\psi_0,
(-\partial_x^2+\epsilon^2)^{-1/4}|\partial_x|\partial_x^{-1}\psi_0\rangle\right)\\
&=\rmn(\calL)-
\rmn\left(\langle(\tL_\epsilon)^{-1}(-\partial_x^2+\epsilon^2)^{-1/4}|\partial_x|\partial_x^{-1}\psi_0,
(-\partial_x^2+\epsilon^2)^{-1/4}|\partial_x|\partial_x^{-1}\psi_0\rangle\right).
\end{split}
\]
The second equality follows from \eref{e:340}. Regarding the second quantity
on the right, by the definition of $\tL_\epsilon$ we have
\[
\begin{split}
&\langle(\tL_\epsilon)^{-1}(-\partial_x^2+\epsilon^2)^{-1/4}|\partial_x|\partial_x^{-1}\psi_0,
(-\partial_x^2+\epsilon^2)^{-1/4}|\partial_x|\partial_x^{-1}\psi_0\rangle=\\
&\qquad\qquad\langle\calL^{-1}(-\partial_x^2+\epsilon^2)^{-1/2}|\partial_x|\partial_x^{-1}\psi_0,
(-\partial_x^2+\epsilon^2)^{-1/2}|\partial_x|\partial_x^{-1}\psi_0\rangle.
\end{split}
\]
Note that  the expression
$\calL^{-1}[(-\partial_x^2+\epsilon^2)^{-1/2}|\partial_x|\partial_x^{-1}\psi_0]$
makes sense, since
$(-\partial_x^2+\epsilon^2)^{-1/2}|\partial_x|\partial_x^{-1}\psi_0\in
\ker(\calL)^\perp$. Since
$(-\partial_x^2+\epsilon^2)^{-1/2}|\partial_x|\partial_x^{-1}\psi_0\to
\partial_x^{-1}\psi_0$ in $L^2(\R)$ sense, we have
 $$
 \dpr{\calL^{-1}(-\partial_x^2+\epsilon^2)^{-1/2}|\partial_x|\partial_x^{-1}\psi_0}{
(-\partial_x^2+\epsilon^2)^{-1/2}|\partial_x|\partial_x^{-1}\psi_0}\to
\dpr{\calL^{-1} \partial_x^{-1}\psi_0}{\partial_x^{-1}\psi_0}.
 $$
It follows that for all $\epsilon>0$ sufficiently small
\[
\rmn\left(\langle\calL^{-1}(-\partial_x^2+\epsilon^2)^{-1/2}|\partial_x|\partial_x^{-1}\psi_0,
(-\partial_x^2+\epsilon^2)^{-1/2}|\partial_x|\partial_x^{-1}\psi_0\rangle\right)=
\rmn(\langle\calL^{-1}(\partial_x^{-1}\psi_0),\partial_x^{-1}\psi_0\rangle).
\]
Putting this all together, we see that the Hamiltonian-Krein index for
$\epsilon>0$ small satisfies
\[
K_{\Ham}^\epsilon=\rmn(\calL)-
\rmn(\langle\calL^{-1}(\partial_x^{-1}\psi_0),\partial_x^{-1}\psi_0\rangle).
\]
Since the quantity on the right is $\epsilon$-independent, and since the
index is integer-valued, we can take the limit as $\epsilon\to0^+$ and
conclude that for the original eigenvalue problem \eref{20},
\[
K_{\Ham}^0=\rmn(\calL)-
\rmn(\langle\calL^{-1}(\partial_x^{-1}\psi_0),\partial_x^{-1}\psi_0\rangle).
\]
In other words, the Hamiltonian-Krein index for the equivalent problem
\eref{20} does not depend at all on the reformulation of the eigenvalue
problem, and can be stated in terms of the original operators $\partial_x$
and $\calL$.

Finally, recall that the reformulated eigenvalue problem assumed that all of
the eigenfunctions resided in $\dot{H}^{-1/2}(\R)$. Since the eigenfunctions
associated with nonzero eigenvalues must reside in $\dot{H}^{-1}(\R)$, the
index for the original eigenvalue problem \eref{5} must be the same as for
the reformulated problem. This allows us to conclude with the following:

\begin{theorem}\label{t:index}
Consider the eigenvalue problem
\[
\partial_x\calL u=\lambda u,\quad u\in L^2(\R),
\]
where the self-adjoint operator $\calL$ satisfies $D(\calL)=H^s(\R)$ for some
$s\geq 0$. Assuming that
\[
\langle\calL^{-1}(\partial_x^{-1}\psi_0),\partial_x^{-1}\psi_0\rangle\neq0,
\]
the Hamiltonian-Krein index satisfies
\[
K_{\Ham}=\rmn(\calL)-
\rmn\left(\langle\calL^{-1}(\partial_x^{-1}\psi_0),\partial_x^{-1}\psi_0\rangle\right).
\]
\end{theorem}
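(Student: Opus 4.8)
The plan is to leverage the reformulation already established in \autoref{sec:3}: via the isometry $|\partial_x|^{1/2}$, the problem \eref{5} is equivalent to \eref{20}, that is $\calJ\tL v=\lambda v$, and here $\calJ$ (the Hilbert transform) has bounded inverse. The natural strategy is to invoke the classical Hamiltonian--Krein index formula of \citep{kapitula:cev04,kapitula:ace05}, which expresses $K_{\Ham}$ as a constrained negative index $\rmn(\tL|_{S^\perp})$, with the constraint space $S$ dictated by the structure of $\gker(\calJ\tL)$ from \autoref{l:gkertL}. The difficulty is that this theory additionally requires $\sigma_{\ess}(\tL)$ to be bounded away from the origin, whereas here $\sigma_{\ess}(\tL)=[0,+\infty)$ touches zero. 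This is the main obstacle, and the entire argument is organized around circumventing it.

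To do so I would regularize, replacing $\tL$ by $\tL_\epsilon=(-\partial_x^2+\epsilon^2)^{1/4}\calL(-\partial_x^2+\epsilon^2)^{1/4}$ for $0<\epsilon\ll1$. By \autoref{prop:109} the essential spectrum of $\tL_\epsilon$ is pushed into $[\ka^2\epsilon,+\infty)$, so the classical theory now genuinely applies to $\calJ\tL_\epsilon$; and since $(-\partial_x^2+\epsilon^2)^{1/4}$ is boundedly invertible, the negative indices agree, $\rmn(\tL_\epsilon)=\rmn(\calL)=\rmn(\tL)$, by \eref{e:340} together with \autoref{prop:spec}. Applying the index formula to the constrained problem \eref{e:342} then gives $K_{\Ham}^\epsilon=\rmn(\tL_\epsilon|_{S_\epsilon^\perp})$, which (using, e.g., \citep{kapitula:sif12}) equals $\rmn(\tL_\epsilon)$ minus the negative index of the scalar $\langle\tL_\epsilon^{-1}s_\epsilon,s_\epsilon\rangle$, where $s_\epsilon=(-\partial_x^2+\epsilon^2)^{-1/4}|\partial_x|\partial_x^{-1}\psi_0$.

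The final step is the limit $\epsilon\to0^+$. Unwinding the definition of $\tL_\epsilon$, the scalar rewrites as
\[
\langle\calL^{-1}(-\partial_x^2+\epsilon^2)^{-1/2}|\partial_x|\partial_x^{-1}\psi_0,(-\partial_x^2+\epsilon^2)^{-1/2}|\partial_x|\partial_x^{-1}\psi_0\rangle,
\]
where $\calL^{-1}$ is legitimate because the argument lies in $\ker(\calL)^\perp$. Since $(-\partial_x^2+\epsilon^2)^{-1/2}|\partial_x|\partial_x^{-1}\psi_0\to\partial_x^{-1}\psi_0$ in $L^2(\R)$, this converges to $\langle\calL^{-1}(\partial_x^{-1}\psi_0),\partial_x^{-1}\psi_0\rangle$, which is nonzero by hypothesis; hence for small $\epsilon$ its negative index is constant, equal to $\rmn(\langle\calL^{-1}(\partial_x^{-1}\psi_0),\partial_x^{-1}\psi_0\rangle)$. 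Because $K_{\Ham}^\epsilon$ is integer-valued and $\epsilon$-independent for small $\epsilon$, I can pass to the limit to obtain the stated formula for the reformulated problem, and finally transfer it to \eref{5} by noting that eigenfunctions for nonzero eigenvalues automatically lie in $\dot{H}^{-1}(\R)$, so the change of variables neither creates nor destroys eigenvalues. I expect the genuine work to sit in justifying the regularization --- \autoref{prop:109} and the identity \eref{e:340} --- together with the $L^2$ convergence that makes the limiting negative index well defined.
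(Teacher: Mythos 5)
Your proposal follows the paper's own argument essentially step for step: the reformulation to $\calJ\tL v=\lambda v$, the regularization $\tL_\epsilon=(-\partial_x^2+\epsilon^2)^{1/4}\calL(-\partial_x^2+\epsilon^2)^{1/4}$ justified by \autoref{prop:109} and \eref{e:340}, the constrained-index computation via \citep{kapitula:sif12} on \eref{e:342}, the $L^2$ convergence of $(-\partial_x^2+\epsilon^2)^{-1/2}|\partial_x|\partial_x^{-1}\psi_0$ to $\partial_x^{-1}\psi_0$, and the integer-valuedness argument to pass to the limit. This is correct and matches the paper's proof in both strategy and detail.
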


\begin{remark}
The result of \autoref{t:index} is the one that would be expected if the skew
operator $\partial_x$ had a bounded inverse. On the other hand, when
considering the eigenvalue problem on the space of spatially periodic
functions, the index satisfies
\[
K_{\Ham}=\rmn(\calL)-\rmn\left(\vD\right),\quad
\vD=\left(\begin{array}{cc}
\langle\calL^{-1}(\partial_x^{-1}\psi_0),\partial_x^{-1}\psi_0\rangle&
\langle\calL^{-1}(\partial_x^{-1}\psi_0),1\rangle\\
\langle\calL^{-1}(\partial_x^{-1}\psi_0),1\rangle&
\langle\calL^{-1}(1),1\rangle
\end{array}\right)
\]
\citep{deconinck:ots10,bronski:ait11}. The $(1,1)$ term in the matrix $\vD$
is exactly that as seen in the above theorem. In this latter case, the
additional terms in $\vD$ arise from the fact that
$\ker(\partial_x)=\Span\{1\}$: this kernel is not present when considering
the problem on the whole line in the space $L^2(\R)$. In both cases the
eigenfunctions associated with nonzero eigenvalues must have mean zero, but
interestingly enough it is only in the latter case that this restriction
actually has an effect on the index.
\end{remark}

\section{Applications to KdV-like problems}
 
 \label{Sec:4.1}

Consider an evolution equation written in the form
\begin{equation}\label{e:kdv}
\partial_tu=\partial_x\calH'(u),
\end{equation}
where $\calH(u):H^\ell(\R)\mapsto\R$ is some smooth energy functional. It is
assumed that solutions are invariant under spatial translation, and that
there are no other symmetries present in the system. In traveling coordinates
$\xi=x-ct$ the equation can be rewritten as
\[
\partial_tu=\partial_\xi\left[\calH'(u)+cu\right].
\]
For a given $a\in\R$ the solitary wave solutions $U_c$ will satisfy
\begin{equation}\label{e:kdvs}
\calH'(U_c)=-cU_c+a.
\end{equation}
It will be assumed that these solutions are smooth in the wave-speed over
some nonempty interval. It will further be assumed that $U_c\in H^k(\R)$ for
some $k\ge1$ and all $c$.

The linearized eigenvalue problem is given by
\begin{equation}\label{e:kdveval}
\partial_\xi\calL u=\lambda u,\quad
\calL\coloneqq\calH''(U_c)+c.
\end{equation}
Since solutions are invariant under spatial translation, it is the case that
\[
\calL(\partial_\xi U_c)=0.
\]
By assumption it is clearly the case that $\partial_\xi
U_c\in\dot{H}^{-1}(\R)$. Now, differentiating the existence problem
\eref{e:kdvs} with respect to $c$ yields
\[
\calL(\partial_c U_c)=-U_c\quad\Rightarrow\quad
\partial_cU_c=-\calL^{-1}(U_c).
\]
As a consequence, upon making the equivalence that $\psi_0=\partial_\xi U_c$
in the statement of \autoref{t:index}, it is seen that $\p_{\xi}^{-1} \psi_0=U_c$ and hence
\[
\langle\calL^{-1}\partial_\xi^{-1}\psi_0,\partial_\xi^{-1}\psi_0\rangle=
-\langle\partial_cU_c,U_c\rangle=-\frac12\partial_c\langle U_c,U_c\rangle.
\]
Defining $\rmp(a)=1$ for $a>0$ and $\rmp(a)=0$ for $a<0$, the
Hamiltonian-Krein index for the eigenvalue problem \eref{e:kdveval} can then
be stated as:

\begin{theorem}\label{thm:kdvsp}
Consider the KdV-like evolution equation \eref{e:kdv}. Let $U_c$ be a
solitary wave which solves \eref{e:kdvs} and satisfies the properties
proscribed above. Let
\[
\calL\coloneqq\calH''(U_c)+c
\]
be the linearization about the wave in traveling coordinates. Assume that
\[
\ker(\calL)=\Span\{\partial_\xi U_c\},\quad
 \partial_c\langle U_c,U_c\rangle\neq0.
\]
The Hamiltonian-Krein index for the linearized eigenvalue problem
\eref{e:kdveval} is then
\[
K_{\Ham}=\rmn(\calL)-\rmp\left(\partial_c\langle U_c,U_c\rangle\right).
\]
\end{theorem}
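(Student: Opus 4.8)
The plan is to obtain the result as a direct specialization of \autoref{t:index}, taking $\psi_0 = \partial_\xi U_c$. First I would check that the standing hypotheses on $\calL$ are met: translation invariance of \eref{e:kdv} gives $\calL(\partial_\xi U_c) = 0$, and the assumption $\ker(\calL) = \Span\{\partial_\xi U_c\}$ supplies the one-dimensional kernel; the smoothness and decay of $U_c$ place $\partial_\xi U_c \in H^\infty(\R) \cap \dot{H}^{-1}(\R)$, so $\psi_0 = \partial_\xi U_c$ is an admissible choice. With this identification one has $\partial_\xi^{-1}\psi_0 = U_c$, and $U_c \in \ker(\calL)^\perp$ since $\langle U_c, \partial_\xi U_c\rangle = \frac12\int \partial_\xi(U_c^2)\,\rmd\xi = 0$, so that $\calL^{-1}(\partial_\xi^{-1}\psi_0)$ is well-defined.

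The crux is to re-express the scalar appearing in \autoref{t:index} via the conserved quantity $\langle U_c, U_c\rangle$. Differentiating the profile equation \eref{e:kdvs} in the wave-speed $c$ yields $\calL(\partial_c U_c) = -U_c$, hence $\partial_c U_c = -\calL^{-1}(U_c)$. Substituting $\partial_\xi^{-1}\psi_0 = U_c$ gives
\[
\langle\calL^{-1}(\partial_\xi^{-1}\psi_0),\partial_\xi^{-1}\psi_0\rangle
= \langle\calL^{-1}(U_c),U_c\rangle
= -\langle\partial_c U_c,U_c\rangle
= -\frac12\partial_c\langle U_c,U_c\rangle.
\]
In particular the nondegeneracy hypothesis of \autoref{t:index} --- that this scalar be nonzero --- is precisely the assumption $\partial_c\langle U_c, U_c\rangle \neq 0$.

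It then remains to convert the negative-index count of a scalar into the indicator $\rmp$. Viewing a nonzero real number $\alpha$ as a $1\times 1$ self-adjoint operator, $\rmn(\alpha) = 1$ when $\alpha < 0$ and $\rmn(\alpha) = 0$ when $\alpha > 0$; since $-\frac12\partial_c\langle U_c, U_c\rangle < 0$ exactly when $\partial_c\langle U_c, U_c\rangle > 0$, this gives
\[
\rmn\left(\langle\calL^{-1}(\partial_\xi^{-1}\psi_0),\partial_\xi^{-1}\psi_0\rangle\right) = \rmp\left(\partial_c\langle U_c, U_c\rangle\right).
\]
Inserting this into the formula $K_{\Ham} = \rmn(\calL) - \rmn(\langle\calL^{-1}(\partial_\xi^{-1}\psi_0),\partial_\xi^{-1}\psi_0\rangle)$ supplied by \autoref{t:index} yields the claim. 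I expect no genuine analytic obstacle in this theorem: all of the functional-analytic difficulty (the unbounded skew operator, the essential spectrum touching the origin) was already handled in \autoref{t:index}, so the only points demanding care are the sign bookkeeping in the passage from $\rmn$ to $\rmp$ and the verification that $\partial_\xi^{-1}\psi_0 = U_c$ lies in $\ker(\calL)^\perp$ so that the inverse is meaningful.
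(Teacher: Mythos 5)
Your proposal is correct and follows essentially the same route as the paper: specializing \autoref{t:index} with $\psi_0=\partial_\xi U_c$, using $\partial_\xi^{-1}\psi_0=U_c$ and the differentiated profile equation $\calL(\partial_c U_c)=-U_c$ to identify the scalar as $-\tfrac12\partial_c\langle U_c,U_c\rangle$, and then converting $\rmn$ of that quantity into $\rmp\left(\partial_c\langle U_c,U_c\rangle\right)$. Your added check that $U_c\in\ker(\calL)^\perp$ is a small, welcome piece of bookkeeping that the paper handles only implicitly via the Fredholm remark earlier in the text.
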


\begin{remark}
Since $k_\rmc$ and $k_\rmi^-$ are even integers, the underlying wave will be
spectrally unstable with $k_\rmr\ge1$ if $K_{\Ham}$ is odd.
\end{remark}

\begin{remark}
From \autoref{thm:kdvsp} we (partially) recover the following well-known
result. Suppose that $\rmn(\calL)=1$. If $\partial_c\langle
U_c,U_c\rangle>0$, then under some additional genericity conditions it is known
that the wave is orbitally stable \citep{bona:sai87} (also see
\citep[Chapter~5]{kapitula:ait12}). As we see from \autoref{thm:kdvsp}, under
this scenario $K_{\Ham}=0$, so that all of the spectra is purely imaginary,
and any (embedded) eigenvalues must have positive Krein signature. On the
other hand, if $\partial_c\langle U_c,U_c\rangle<0$, then $K_{\Ham}=1$
implies that $k_\rmr=1$, which corroborates the Evans function calculation of
\citet{pego:eis92,pego:efm93}.
\end{remark}

\subsection{Fractional  KdV equations}\label{fracKDV}

The exact form of $\calL$ in \autoref{thm:kdvsp} is unimportant, as long as
the desired properties hold. Thus, as a generalization of the above we obtain
the following result of \citet[Theorem~1]{lin:son08} concerning the more
general KdV-type equation
\begin{equation}\label{lin:kdv}
\partial_tu-\p_x\left(\cm u-f(u)\right)=0,
\end{equation}
where $f\in C^1$, $f(0)=f'(0)=0$, and the operator $\calM$ is defined through
its Fourier symbol via $\widehat{\cm g}(\xi)=\al(\xi) \hat{g}(\xi)$. Examples
of this type include the KdV equations (and its generalizations) with
$\cm=-\p_x^2$, the Benjamin-Ono equation with $\cm=|\p_x|$, etc. We
henceforth will assume that the multiplier $\al(\xi)$ is a continuous
function of its argument with $\lim_{|\xi|\to \infty} \al(\xi)=\infty$.
 
\begin{corollary}\label{cor:44}
For the generalized KdV-type equation \eref{lin:kdv} assume that the
linearized operator $\cl= \cm+c-f'(U_c)$ satisfies the original assumptions
(a), (b), (c) with
$$
\ker(\cl)= \{\p_\xi U_c\}.
$$
The Hamiltonian-Krein index for the spectral problem $\partial_\xi\calL
u=\lambda u$ is
\begin{equation}
\label{eq:07}
K_{\Ham}=\rmn(\calL)-\rmp\left(\partial_c\langle U_c,U_c\rangle\right).
\end{equation}
In particular,
\begin{itemize}
\item the wave $U_c$ is spectrally unstable if $K_{\Ham}$ is odd, i.e.,
    \begin{itemize}
    \item $\rmn(\cl)$ is even and $\partial_c\langle U_c,U_c\rangle>0$
    \item $\rmn(\cl)$ is odd and $\partial_c\langle U_c,U_c\rangle<0$
    \end{itemize}
\item the wave $U_c$ is spectrally stable if $\rmn(\cl)=1$ and
    $\partial_c\langle U_c,U_c\rangle>0$.
\end{itemize}
\end{corollary}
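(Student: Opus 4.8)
The plan is to exhibit \eref{lin:kdv} as an instance of the abstract KdV-like equation \eref{e:kdv} and then apply \autoref{thm:kdvsp} directly. First I would recover the Hamiltonian structure: with $F'=f$ and $F(0)=0$, set $\calH(u)\coloneqq\frac{1}{2}\langle\cm u,u\rangle-\int_\R F(u)\,\rmd x$, so that $\calH'(u)=\cm u-f(u)$ and \eref{lin:kdv} becomes $\partial_tu=\partial_x\calH'(u)$, which is exactly \eref{e:kdv}. The profile $U_c$ then satisfies $\calH'(U_c)=-cU_c+a$ (with $a=0$ for a homoclinic wave decaying at infinity), i.e.\ \eref{e:kdvs}, and the second variation produces the linearization $\calL=\calH''(U_c)+c=\cm+c-f'(U_c)$, matching the operator in the statement.

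Next I would verify the hypotheses of \autoref{thm:kdvsp}. Assumptions (a), (b), (c) are granted, and in particular $\ker(\calL)=\Span\{\partial_\xi U_c\}$, so I take $\psi_0=\partial_\xi U_c$. Translation invariance gives $\calL(\partial_\xi U_c)=0$, while differentiating the profile equation in $c$ yields $\calL(\partial_c U_c)=-U_c$, hence $\partial_c U_c=-\calL^{-1}U_c$. Since $\partial_\xi^{-1}\psi_0=U_c$, the identity
\[
\langle\calL^{-1}\partial_\xi^{-1}\psi_0,\partial_\xi^{-1}\psi_0\rangle=\langle\calL^{-1}U_c,U_c\rangle=-\langle\partial_c U_c,U_c\rangle=-\frac{1}{2}\partial_c\langle U_c,U_c\rangle
\]
shows that the nondegeneracy $\partial_c\langle U_c,U_c\rangle\neq0$ is equivalent to the hypothesis of \autoref{thm:kdvsp}, and that this scalar is negative precisely when $\partial_c\langle U_c,U_c\rangle>0$. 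Consequently $\rmn(\langle\calL^{-1}\partial_\xi^{-1}\psi_0,\partial_\xi^{-1}\psi_0\rangle)=\rmp(\partial_c\langle U_c,U_c\rangle)$, which converts the conclusion of \autoref{thm:kdvsp} into the index formula \eref{eq:07}.

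It then remains to read off the stability dichotomy from \eref{eq:07} by a parity count. Because $k_\rmc$ and $k_\rmi^-$ are even, $K_{\Ham}$ is odd only if $k_\rmr\geq1$, which forces a real positive eigenvalue and hence spectral instability; the quantity $\rmn(\calL)-\rmp(\partial_c\langle U_c,U_c\rangle)$ is odd in exactly the two listed cases, namely $\rmn(\calL)$ even with $\partial_c\langle U_c,U_c\rangle>0$ (so $\rmp=1$) and $\rmn(\calL)$ odd with $\partial_c\langle U_c,U_c\rangle<0$ (so $\rmp=0$). For the stability assertion, if $\rmn(\calL)=1$ and $\partial_c\langle U_c,U_c\rangle>0$ then $\rmp=1$ and \eref{eq:07} gives $K_{\Ham}=0$; as every summand is nonnegative this forces $k_\rmr=k_\rmc=k_\rmi^-=0$, so no eigenvalue has positive real part.

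Since \autoref{thm:kdvsp} does all of the spectral-theoretic work, there is no deep obstacle here. The only points demanding care are the identification of a genuinely smooth energy $\calH$ and the confirmation that $\cm+c$ serves as the strongly elliptic piece $\calL_0$ with $\alpha(\xi)+c\geq\kappa^2$ while $-f'(U_c)$ is a relatively compact perturbation — both of which are folded into the assumed hypotheses (a), (b), (c) — together with getting the sign right in the key identity for $\langle\calL^{-1}U_c,U_c\rangle$, which is what determines the value with which $\rmp$ enters the index.
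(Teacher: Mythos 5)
Your proposal is correct and follows essentially the same route as the paper: the paper obtains \autoref{cor:44} by observing that the exact form of $\calL$ in \autoref{thm:kdvsp} is immaterial, having already computed $\langle\calL^{-1}\partial_\xi^{-1}\psi_0,\partial_\xi^{-1}\psi_0\rangle=-\tfrac12\partial_c\langle U_c,U_c\rangle$ with $\psi_0=\partial_\xi U_c$ and read off the parity dichotomy from the evenness of $k_\rmc$ and $k_\rmi^-$. Your explicit identification of the energy functional $\calH(u)=\tfrac12\langle\cm u,u\rangle-\int_\R F(u)\,\rmd x$ and the sign bookkeeping for $\rmp$ are exactly the steps the paper leaves implicit.
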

 
One should compare these results with the corresponding results of
\citet[Theorem~1]{lin:son08} for KdV-type equations. In particular, we do not
require the multiplier $\al(\xi)$ to have any specific form, like
$\al(\xi)\sim |\xi|^m$, but only the natural conditions
\begin{itemize}
\item  $\cm+c\geq \de^2\calI>0$ for some $\de>0$,
\item $f'(U_c)$ is a relatively compact perturbation of $\cm+c$.
\end{itemize}
The condition $\cm+c\geq \de^2\calI$ is satisfied by simply requiring that
$\al(\xi)\geq 0$  and $c>0$, in which case, we may select $\de=c/2$. The
relative compactness of $f'(U_c)$ follows from \textit{any (even power)
decay} of $U_c$ at infinity, as well as \textit{any (even power) decay} of
the kernel of $(\cm+c)^{-1}$, which is given by
$$
K(x)=\int_{-\infty}^{+\infty}  \f{1}{c+\al(\xi)}\rme^{2\pi\rmi \xi x}\,\rmd\xi.
$$
This last condition is satisfied, under very mild growth requirements of
$\al$.  To see this, an easy integration by
parts argument implies
$$
|K(x)|\leq \f{C}{|x|}\int_{-\infty}^\infty \f{|\al'(\xi)|}{(c+\al(\xi))^2} d\xi
$$
This allows us to formulate  more specific and easily verifiable conditions
which imply that the essential spectrum assumption (b) is satisfied:

\begin{proposition}
\label{prop:47} Let $c>0$ and the wave $U_c$ and the multiplier $\al$ satisfy
\begin{itemize}
\item $U_c$ has some power decay at $\infty$,
\item the Fourier multiplier $\al(\xi)$
\begin{itemize}
\item is a continuous differentiable a.e. function with $\al(\xi)\geq 0$
    and $\lim_{|\xi| \to \infty} \al(\xi)=\infty$,
\item satisfies the estimate
\begin{equation}
\label{o:17}
\int_{-\infty}^{+\infty}  \f{|\al'(\xi)|}{(c+\al(\xi))^2}\,\rmd\xi<\infty.
\end{equation}
\end{itemize}
\end{itemize}
Then $\cm+c>\de^2 \calI$, and $f'(U_c)$ is a relatively compact perturbation
of $\cm+c$. In other words, requirement (b) for the operator
$\cl=\cm+c-f'(U_c)$ is satisfied.
\end{proposition}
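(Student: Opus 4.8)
The plan is to establish the two stated properties separately---first the strong ellipticity $\cm+c>\de^2\calI$, and then the relative compactness of the multiplication operator $f'(U_c)$ with respect to $\cm+c$---since together these are exactly what is needed to invoke Weyl's theorem (as in the proof of \autoref{prop:109}) and conclude that requirement~(b) holds. Recall that, $\cm+c$ being invertible, relative compactness means that $f'(U_c)(\cm+c)^{-1}$ is compact on $L^2(\R)$.

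The ellipticity is immediate. Since $\cm$ is the Fourier multiplier with symbol $\al(\xi)$, the operator $\cm+c$ has symbol $\al(\xi)+c$, and $\al(\xi)\ge0$, $c>0$ give $\al(\xi)+c\ge c>0$ for all $\xi$. By Plancherel, $\dpr{(\cm+c)f}{f}=\int_\R(\al(\xi)+c)|\hat f(\xi)|^2\,\rmd\xi\ge c\,\|f\|_{L^2}^2$, whence $\cm+c\ge c\,\calI$ and one may take $\de=c/2$.

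For the relative compactness I would work through the convolution kernel of the resolvent. Because $c+\al(\xi)\ge c$, the inverse $(\cm+c)^{-1}$ is the Fourier multiplier with symbol $1/(c+\al(\xi))$, i.e.\ convolution with $K(x)=\int_\R \f{1}{c+\al(\xi)}\rme^{2\pi\rmi\xi x}\,\rmd\xi$. An integration by parts in $\xi$, justified by the finiteness of \eref{o:17}, produces the bound $|K(x)|\le \f{C}{|x|}\int_\R\f{|\al'(\xi)|}{(c+\al(\xi))^2}\,\rmd\xi$ for $|x|\ge1$, so $K$ decays like $|x|^{-1}$ at infinity, while near the origin it lies in $L^2_{\loc}$ thanks to the boundedness and high-frequency decay of the symbol. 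On the multiplier side, $V(x):=f'(U_c(x))$ is bounded (as $U_c\in L^\infty(\R)$) and, since $f\in C^1$ with $f'(0)=0$ and $U_c$ has power decay, $V(x)\to0$ at infinity with that same power decay. The operator $f'(U_c)(\cm+c)^{-1}$ then has integral kernel $V(x)K(x-y)$, and I would conclude compactness by verifying that it is Hilbert--Schmidt, using $\|f'(U_c)(\cm+c)^{-1}\|_{HS}^2=\|V\|_{L^2}^2\,\|K\|_{L^2}^2$, both factors being finite by the power decay of $V$ and the $|x|^{-1}$ decay together with local square-integrability of $K$.

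The main obstacle is this last compactness step when $\al$ grows only slowly: then $1/(c+\al)$ need not belong to $L^2(\R)$, so $K\notin L^2(\R)$ and the Hilbert--Schmidt estimate fails. In that case I would instead invoke the robust principle that $g(x)\,h(\p_x)$ is compact whenever the function $g$ and the symbol $h$ both vanish at infinity---here $V=f'(U_c)\to0$ at spatial infinity and $1/(c+\al(\xi))\to0$ as $|\xi|\to\infty$ because $\al(\xi)\to\infty$. Approximating $V$ and $1/(c+\al)$ by Schwartz functions, for which the operator is Hilbert--Schmidt, and passing to the limit in operator norm via $\|g(x)h(\p_x)\|\le\|g\|_{L^\infty}\|h\|_{L^\infty}$, yields compactness for every admissible $\al$. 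In either case $f'(U_c)(\cm+c)^{-1}$ is compact, so $f'(U_c)$ is a relatively compact perturbation of $\cm+c$ and, combined with the ellipticity, requirement~(b) for $\cl=\cm+c-f'(U_c)$ follows.
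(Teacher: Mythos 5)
Your proof is correct, but the step that actually carries it is genuinely different from the paper's. The paper's ``proof'' is the informal discussion preceding the proposition, and it runs entirely through the convolution kernel $K$ of $(\cm+c)^{-1}$: the integration by parts permitted by \eref{o:17} gives $|K(x)|\le C|x|^{-1}$, and compactness of $f'(U_c)(\cm+c)^{-1}$ is then asserted to follow from the power decay of $U_c$ together with this power decay of $K$ --- the same decay-plus-smoothing mechanism used in \autoref{prop:sch}. You reproduce that kernel estimate and first try to close it as a Hilbert--Schmidt bound, correctly observe that this can fail (indeed when $1/(c+\al)\notin L^2$ the kernel $K$ need not even be locally square integrable near the origin, contrary to your parenthetical claim, and $\|V\|_{L^2}$ can also be infinite since the decay rate of $f'(U_c)$ depends on the modulus of continuity of $f'$ at $0$, not only on the decay of $U_c$), and then switch to the abstract principle that $g(x)h(\p_x)$ is compact on $L^2(\R)$ whenever $g$ and $h$ are continuous and vanish at infinity, proved by uniform approximation by compactly supported symbols for which the operator is Hilbert--Schmidt. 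That fallback is sound and covers every admissible case, so you could drop the first route entirely; it is also strictly more economical than the paper's argument, since it uses only $\al(\xi)\to\infty$ and $f'(U_c)(x)\to 0$ and never invokes \eref{o:17} --- that hypothesis is needed only if one insists on the kernel-decay route. Either way the conclusion $\cm+c\ge c\,\calI$ plus relative compactness feeds into Weyl's theorem exactly as in \autoref{prop:109}, yielding requirement (b).
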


It is instructive to consider the spectral stability of the traveling
wave $U_c(x)=4c/(1+c^2 x^2)$ of the Benjamin-Ono equation
$$
\partial_tu-\p_x\left(|\p_x| u-u^2\right)=0.
$$
The orbital stability of these
waves has already been established by \citet{albert:tpa91} and
\citet{albert:scf87}; hence, this wave is spectrally stable. An alternative
approach is to use the approach of \autoref{cor:44} and  use the spectral
information provided by the work of \citet{amick:uar91}), see the remarks after Theorem \ref{theo:frkdv}.

In fact, we have more general result, which is applicable to the so-called
fractional KdV equations (or, fractional Benjamin-Ono equation as they are
referred to in \citep{frank:uan12})
\begin{equation}
\label{frKDV}
\partial_tu -\p_x\left(|\p_x|^{s} u-u^{p+1}\right)=0.
\end{equation}
Using the traveling wave ansatz $U_c(\xi),\,\xi=x-ct$ with $c>0$ in
\eref{frKDV} (and requiring that $U_c$ vanishes at $\pm \infty$) leads us to
the existence equation
\begin{equation}
\label{frKDV:1}
|\p_\xi|^s U_c+ c U_c-U_c^{p+1}=0
\end{equation}
An elementary scaling analysis then shows that $U_c(\xi)=c^{1/p}
Q(c^{1/s}\xi)$, where $Q$ satisfies
\begin{equation}
\label{frKDV:2}
|\p_\xi|^s Q+ Q-Q^{p+1}=0.
\end{equation}
Assume that $0<s<2$, and set
\[
p_{\max}(s)=\begin{cases}
2s/(1-s),\quad & 0<s<1 \\
+\infty,\quad & 1\leq s<2.
\end{cases}
\]
It was recently shown by \citet{frank:uan12} that if $0<p<p_{\max}(s)$, then
\eref{frKDV:2} has an unique (up to translation) ground state solution $Q$
which is positive and bell-shaped, i.e., even and decreasing in $(0,\infty)$.
In addition, the solution $Q$ has  decay as $\xi\to+\infty$; in fact,
$|Q(\xi)|\leq C |\xi|^{-1}$ (see \citep[Lemma~5.6]{frank:uan12}).
Furthermore, its linearized operator,
$$
\calL_+=|\p_\xi|^s+1 - (p+1) Q^p,
$$
satisfies $\rmn(\calL_+)=1$, the kernel is simple with
$\ker(\calL_+)=\Span\{\partial_\xi Q\}$, and the rest of the spectrum is
positive, with a spectral gap at the zero. A simple rescaling then implies
that similar statements hold for the linearized operator
$$
\calL_c=|\p_\xi|^s+c - (p+1) U_c^p.
$$
Going back to the linearized stability of $U_c$, we see that we need to
consider the eigenvalue problem
$$
\p_\xi\calL_c z=\lambda z.
$$
By the decay  of $Q$, the relative compactness of $U_c^p$ follows easily
(indeed, note that  the condition is satisfied for when the multiplier is
$\al(\xi)=|\xi|^s$ for any $s>0$). Thus, we can apply \autoref{cor:44} to
conclude that
\[
K_{\Ham}=1-\rmp(\p_c\langle
U_c,U_c\rangle).
\]
The stability of the traveling wave $U_c$ is equivalent to the positivity of
$\p_c\langle U_c,U_c\rangle$. Since
$$
\p_c\langle U_c,U_c\rangle= const.\left(\f{2}{p}-\f{1}{s}\right) c^{\f{2}{p}-\f{1}{s}-1}
$$
for some positive constant, we have shown the following:

\begin{theorem}
\label{theo:frkdv} Let $0<s<2$ and $0<p<p_{\max}$. The unique ground state
traveling wave solutions $U_c$ are spectrally stable if
$$
0<p<2 s.
$$
Otherwise, if $2s<p<p_{\max}(s)$ there is precisely one positive real
eigenvalue, and the rest of the spectrum is purely imaginary.
\end{theorem}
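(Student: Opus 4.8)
The plan is to deduce \autoref{theo:frkdv} directly from \autoref{cor:44}, so the first task is to confirm that its hypotheses hold for $\calL_c=|\p_\xi|^s+c-(p+1)U_c^p$, with the identification $\psi_0=\p_\xi U_c$. Assumptions (a) and (c) are precisely the Frank--Lenzmann facts quoted above: the rescaled ground state $Q$ yields $\rmn(\calL_+)=1$, a simple kernel $\ker(\calL_+)=\Span\{\p_\xi Q\}$, and a spectral gap at the origin, and the scaling $U_c(\xi)=c^{1/p}Q(c^{1/s}\xi)$ transports these to $\calL_c$, giving $\rmn(\calL_c)=1$ and $\ker(\calL_c)=\Span\{\p_\xi U_c\}$. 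For assumption (b) I would invoke \autoref{prop:47} with $\al(\xi)=|\xi|^s$: here $\al(\xi)\geq0$ and $\al(\xi)\to\infty$, and the integral in \eref{o:17} converges for $0<s<2$, since $|\al'(\xi)|/(c+\al(\xi))^2=s|\xi|^{s-1}/(c+|\xi|^s)^2$ decays like $|\xi|^{-s-1}$ at infinity and is integrable at the origin. Together with the power decay $|U_c(\xi)|\leq C|\xi|^{-1}$, this makes $f'(U_c)=(p+1)U_c^p$ a relatively compact perturbation of $\cm+c$.

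With the hypotheses in place, \autoref{cor:44} gives $K_{\Ham}=\rmn(\calL_c)-\rmp(\p_c\langle U_c,U_c\rangle)=1-\rmp(\p_c\langle U_c,U_c\rangle)$, so everything reduces to the sign of $\p_c\langle U_c,U_c\rangle$. Substituting the scaling relation and changing variables $\eta=c^{1/s}\xi$ yields
\[
\langle U_c,U_c\rangle=c^{2/p-1/s}\|Q\|_{L^2}^2,
\]
so that
\[
\p_c\langle U_c,U_c\rangle=\left(\tfrac{2}{p}-\tfrac{1}{s}\right)c^{2/p-1/s-1}\|Q\|_{L^2}^2.
\]
Since $c>0$ and $\|Q\|_{L^2}^2>0$, the sign is governed entirely by $2/p-1/s$, which is positive precisely when $p<2s$ and negative precisely when $p>2s$ (the borderline $p=2s$ being excluded, consistent with the nondegeneracy hypothesis $\p_c\langle U_c,U_c\rangle\neq0$ of \autoref{cor:44}).

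It then remains to read off the spectral picture from the value of $K_{\Ham}$. If $0<p<2s$, then $\p_c\langle U_c,U_c\rangle>0$, so $\rmp=1$ and $K_{\Ham}=0$; as $K_{\Ham}=k_\rmr+k_\rmc+k_\rmi^-$ is a sum of nonnegative integers, each summand vanishes, so there are no eigenvalues with nonzero real part and every purely imaginary eigenvalue has nonnegative Krein signature, i.e.\ the wave is spectrally stable. If $2s<p<p_{\max}(s)$, then $\p_c\langle U_c,U_c\rangle<0$, so $\rmp=0$ and $K_{\Ham}=1$. Because $k_\rmc$ and $k_\rmi^-$ are even and nonnegative, the only decomposition of $1$ is $k_\rmc=k_\rmi^-=0$ and $k_\rmr=1$, giving exactly one positive real eigenvalue. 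Invoking the Hamiltonian symmetry of the spectrum of $\calJ\tL$ (eigenvalues occur in quadruples $\{\lambda,-\lambda,\bar\lambda,-\bar\lambda\}$) together with $k_\rmc=0$, I would conclude that, apart from the symmetric real pair $\pm\lambda_0$, the remainder of the spectrum lies on $\rmi\R$.

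No deep obstacle remains for this particular theorem, since the substantive difficulty was already dispatched in importing the Frank--Lenzmann spectral picture and in establishing \autoref{cor:44}; the only steps requiring genuine care are the verification of \autoref{prop:47} for the multiplier $\al(\xi)=|\xi|^s$ (checking the convergence of \eref{o:17} uniformly across $0<s<2$ and the attendant relative compactness) and the parity bookkeeping that upgrades $K_{\Ham}=1$ to the statement $k_\rmr=1$, $k_\rmc=k_\rmi^-=0$. The scaling computation of $\p_c\langle U_c,U_c\rangle$ is then routine.
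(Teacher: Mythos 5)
Your proposal is correct and follows essentially the same route as the paper: verify the hypotheses of \autoref{cor:44} via the Frank--Lenzmann spectral facts and \autoref{prop:47} with $\al(\xi)=|\xi|^s$, compute $\p_c\langle U_c,U_c\rangle$ by the scaling $U_c(\xi)=c^{1/p}Q(c^{1/s}\xi)$ to get the sign of $2/p-1/s$, and read off the spectrum from $K_{\Ham}\in\{0,1\}$ using the parity of $k_\rmc$ and $k_\rmi^-$. The only difference is that you spell out the parity bookkeeping slightly more explicitly than the paper does; nothing is missing.
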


\begin{remark}
The Benjamin-Ono case corresponds to the case $s=1, p=2$, which is the
borderline case in the above computation. Since in this case $\p_c\langle
U_c,U_c\rangle=0$, the generalized kernel for $\partial_\xi\calL_c$ will have
dimension of at least three. The calculation of the Hamiltonian-Krein index
would have to be modified in order to take into account the fact that the
dimension of the generalized kernel is larger than two. This is a relatively
straightforward exercise which we will leave for the interested reader (e.g.,
see \citep[Index Theorem~2.1]{kapitula:sif12} and the references therein for
an indication as to what modifications must be made). The end result is the
expected one: the index satisfies $K_{\Ham}=0$.
\end{remark}

\begin{remark}
The result of \autoref{theo:frkdv} recovers the classical KdV results for the
limiting case $s=2$, which does not require the theory of \citep{frank:uan12}
in order to conclude the existence of the wave $U_c$. Here we then get the
well-known result of spectral (which in this case implies orbital) stability
for ground state solutions to the generalized KdV whenever $p<4$, and
spectral instability for $p>4$.
\end{remark}

\section{Applications to BBM-like problems}
 
Consider the BBM-type problem
 \begin{equation}
 \label{lin:BBM}
 (\calI+\calM)\partial_tu+\partial_x\left(u+f(u)\right)=0,
 \end{equation}
under the same conditions on the nonlinearity $f$ and the dispersion operator
$\cm$ as in \autoref{cor:44}. These problems have been considered in this
generality by numerous authors; for example, see the paper \cite{lin:son08}
for an extensive discussion. In addition to the requirements on $\calM$
presented in the discussion leading to \autoref{cor:44}, we require that
$\calI+\cm\geq \de^2\calI>0$, which hence possesses a square root and a
bounded inverse\footnote{In the classical BBM model $\cm=-\p_x^2$, in which
case the invertibility of $(1-\p_x^2)^{-1}$ is a well-settled issue}. This
last requirement, in view of the form required of $\cm$ amounts to the
multiplier satisfying the inequality $1+\al(\xi)\geq \de^2$.

After integration in the traveling variable $\xi=x-c t$ the traveling wave
solution $U_c$\footnote{which is henceforth assumed to be homoclinic to zero
at $\xi=\pm \infty$ and in addition, it has at least a power decay} will
satisfy the elliptic PDE
\begin{equation}
\label{s10}
c\cm U_c (\xi)+(c-1) U_c(\xi)-f(U_c)=0.
\end{equation}
Introduce the corresponding linearized operator
 $$
 \cl_0\coloneqq c \cm +(c-1)-f'(U_c).
 $$
Clearly, a differentiation\footnote{Here is where matters that $\cm$ is given
by multiplier, so that $\p_\xi \cm=\cm \p_\xi$} in $\xi$ of the defining
equation  \eref{s10} yields that $\cl_0(\p_\xi U_c)=0$: we henceforth assume
that $\ker(\calL_0)=\Span\{\partial_\xi U_c\}$. Writing $u=U_c(\xi)+v(\xi,t)$
in \eref{lin:BBM} and linearizing yields
\[
 (\calI+\cm)\partial_tv= \p_\xi[c \cm+(c-1)-f'(U_c)]v=\p_\xi \cl_0 v,
\]
which in turn yields the eigenvalue problem
\begin{equation}\label{e:47}
\p_\xi \cl_0 v=\lambda(\calI+\cm)v.
\end{equation}
In order to put this linearized problem in the desired framework, we
introduce the variable
\[
z\coloneqq(\calI+\calM)^{1/2}v.
\]
Taking $(\calI+\cm)^{-1/2}$ on both sides of \eref{e:47} yields the new
eigenvalue problem
\begin{equation}\label{s15}
\p_\xi[(\calI+\cm)^{-1/2} \cl_0 (\calI+\cm)^{-1/2}]z=\lambda z.
\end{equation}

If $\lambda$ is an eigenvalue for \eref{s15} with corresponding eigenfunction
$z$, then the fact that $(\calI+\calM)^{-1/2}$ is a bounded operator implies
that $\lambda$ is an eigenvalue for \eref{e:47} with corresponding
eigenfunction $v=(\calI+\calM)^{-1/2}z$. On the other hand, suppose that
$\lambda$ is a nonzero eigenvalue for \eref{e:47} with corresponding
eigenfunction $v$. Since $(\calI+\calM)v$ necessarily makes sense, it is true
that $z=(\calI+\calM)^{1/2}v$ is well-defined; thus, $\lambda$ is also an
eigenvalue for \eref{s15}. In conclusion, when looking for nonzero
eigenvalues the two spectral problems are equivalent.
Denote
$$
\cl\coloneqq(\calI+\cm)^{-1/2} \cl_0 (\calI+\cm)^{-1/2},
$$
so that the eigenvalue problem associated with \eref{s15} is of the desired
form \eref{5}. We now discuss abstract conditions that would imply
\begin{enumerate}
\item $\sigma_{\ess}(\calL)\subset [\ka_0^2, \infty),\quad \ka_0>0$
\item $\rmn(\calL)=\rmn(\calL_0)$.
\end{enumerate}
The condition (b) is satisfied because  $(\calI+\calM)^{-1/2}$ is a bounded
and symmetric operator. Regarding (a), write
$$
\calL=(\calI+\cm)^{-1/2}[c \cm +(c-1)] (\calI+\cm)^{-1/2} - (\calI+\cm)^{-1/2} f'(U_c) (\calI+\cm)^{-1/2}.
$$
If $(\calI+\cm)^{-1/2} f'(U_c) (\calI+\cm)^{-1/2}$ is a compact operator,
then
$$
\si_{\ess}(\calL)=\si_{\ess}((\calI+\cm)^{-1/2}[c \cm +(c-1)] (\calI+\cm)^{-1/2})=
\mathrm{Range}[\xi \mapsto \f{c\al(\xi)+c-1}{1+\al(\xi)}]\subset [\ka_0^2, \infty).
$$
Indeed, the last inclusion holds because of the continuity of the symbol
$\f{c\al(\xi)+c-1}{1+\al(\xi)}\geq 0$, the fact that $c\al(\xi)+c-1\geq
\de^2$, and the limiting behavior $\lim_{\xi\to \infty}
\f{c\al(\xi)+c-1}{1+\al(\xi)}=c>0$. On the other hand, the compactness of the
operator $(\calI+\cm)^{-1/2} f'(U_c) (\calI+\cm)^{-1/2}$  holds under minimal
assumptions on the decay of $U_c$ and the kernel of $(\calI+\cm)^{-1}$, see Proposition \ref{prop:47}. More precisely, to show that the kernel $K$ of $(\calI+\cm)^{-1/2}$ satisfies $|K(x)|\leq C |x|^{-1}$, we need that the multiplier $\al(\xi)$ satisfies
$$
\int_{-\infty}^\infty \f{|\al'(\xi)|}{(1+\al(\xi))^{3/2}} d\xi<\infty.
$$
This  is certainly satisfied in the cases of interest, $\al(\xi)=|\xi|^s, s>0$.

Continuing with the analysis of the operator $\cl$, we verify by a direct inspection
\[
\cl[ (\calI+\cm)^{1/2} \p_\xi U_c]=(\calI+\cm)^{-1/2} (\cl_0[\p_\xi U_c])=0,
\]
which gives a definitive relationship between the kernels of the two
operators.   In particular, they
have the same dimension. Regarding the Hamiltonian-Krein index for
\eref{s15}, the fact that \\ $(\calI+\cm)^{1/2} \p_\xi
U_c=\p_\xi(\calI+\cm)^{1/2}U_c$ allows us to write
\[
\langle\cl^{-1}\partial_\xi^{-1}[(\calI+\cm)^{1/2} \p_\xi U_c],\partial_\xi^{-1}(\calI+\cm)^{1/2} \p_\xi U_c\rangle=
\langle\calL^{-1}[(\calI+\cm)^{1/2}U_c],(\calI+\cm)^{1/2}U_c\rangle,
\]
which in terms of the original operator yields
\[
\langle\calL^{-1}[(\calI+\cm)^{1/2}U_c],(\calI+\cm)^{1/2}U_c\rangle=
\langle\calL_0^{-1}[(\calI+\cm)U_c],(\calI+\cm)U_c\rangle.
\]
Now, taking a derivative in the variable $c$ in the existence equation
\eref{s10} yields
\[
\cl_0[\p_c U_c]=-(\calI+\cm)U_c\quad\Rightarrow\quad
\cl_0^{-1} [(I+\cm)U_c]=-\p_c U_c,\footnote{This relationship is a verification, via
the Fredholm alternative, that $(I+\cm)U_c\in\ker(\calL_0)^\perp$}
\]
so that
\[
\langle\calL_0^{-1}[(\calI+\cm)U_c],(\calI+\cm)U_c\rangle=
-\frac12\partial_c\langle(\calI+\cm)U_c,U_c\rangle.
\]
Upon applying \autoref{t:index} we have now shown the following:

\begin{theorem}\label{cor:BBM}
Consider the BBM-type equation \eref{lin:BBM}, assume further that $\cl_0$
satisfies
\begin{itemize}
\item $\al(\xi)$ is continuous, $\lim_{|\xi|\to \infty} \al(\xi)=\infty$,
    and $c \al(\xi)+c-1\geq \de^2 >0$ for some $\de>0$
\item $(\calI+ \cm)^{-1/2} f'(U_c)(\calI+ \cm)^{-1/2} $ is a compact operator
    on $L^2(\R)$
\item $\ker(\calL_0)=\Span\{\partial_\xi U_c\}$.
\end{itemize}
The Hamiltonian-Krein index for the spectral problem \eref{e:47} is
\[
K_{\Ham}=\rmn(\calL_0)-
\rmp\left(\partial_c\langle(\calI+\cm) U_c, U_c \rangle\right).
\]
In particular,
\begin{itemize}
\item the wave $U_c$ is spectrally unstable if if $K_{\Ham}$ is odd, i.e.,
    \begin{itemize}
    \item $\rmn(\cl)$ is even and $\partial_c\langle(\calI+\cm) U_c, U_c
        \rangle>0$
    \item $\rmn(\cl)$ is odd and $\partial_c\langle(\calI+\cm) U_c, U_c
        \rangle<0$
    \end{itemize}
\item the wave $U_c$ is spectrally stable if $\rmn(\cl)=1$ and
    $\partial_c\langle(\calI+\cm) U_c, U_c \rangle>0$.
\end{itemize}
\end{theorem}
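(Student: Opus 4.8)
The plan is to read \autoref{cor:BBM} as a direct application of \autoref{t:index} to the conjugated operator $\cl=(\calI+\cm)^{-1/2}\cl_0(\calI+\cm)^{-1/2}$, once the three structural hypotheses (a)--(c) have been verified for $\cl$. Since $(\calI+\cm)^{1/2}$ is bounded with bounded inverse, the reduction already carried out in \eref{e:47}--\eref{s15} shows that the nonzero spectrum of the original problem $\partial_\xi\cl_0 v=\lambda(\calI+\cm)v$ coincides with that of the canonical problem $\partial_\xi\cl z=\lambda z$; it therefore suffices to compute $K_{\Ham}$ for the latter and transcribe the answer back.

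First I would verify the hypotheses. For (b), the operator $(\calI+\cm)^{-1/2}$ is bounded, symmetric, and invertible, so conjugation by it is a congruence and Sylvester's law of inertia yields $\rmn(\cl)=\rmn(\cl_0)$. For (c), the identity $\cl[(\calI+\cm)^{1/2}\partial_\xi U_c]=(\calI+\cm)^{-1/2}\cl_0[\partial_\xi U_c]=0$ together with the injectivity of $(\calI+\cm)^{1/2}$ shows that $\ker(\cl_0)=\Span\{\partial_\xi U_c\}$ forces $\ker(\cl)=\Span\{(\calI+\cm)^{1/2}\partial_\xi U_c\}$ to be one dimensional. The remaining hypothesis (a), that $\sigma_{\ess}(\cl)\subset[\ka_0^2,\infty)$ with $\ka_0>0$, is the delicate point and is where the bulk of the work lies. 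One splits $\cl$ into the free part $(\calI+\cm)^{-1/2}[c\cm+(c-1)](\calI+\cm)^{-1/2}$, whose symbol $\tfrac{c\al(\xi)+c-1}{1+\al(\xi)}$ is continuous, nonnegative, bounded below by a positive constant (using $c\al(\xi)+c-1\ge\de^2$), and tends to $c>0$ at infinity, plus the potential term $(\calI+\cm)^{-1/2}f'(U_c)(\calI+\cm)^{-1/2}$, which must be shown to be relatively compact. The relative compactness follows from the decay of $U_c$ and of the kernel of $(\calI+\cm)^{-1}$ exactly as in \autoref{prop:47}, and a Weyl-type theorem then identifies $\sigma_{\ess}(\cl)$ with the range of the symbol, giving (a).

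With (a)--(c) in hand, applying \autoref{t:index} with $\psi_0=(\calI+\cm)^{1/2}\partial_\xi U_c$ gives
\[
K_{\Ham}=\rmn(\cl)-\rmn\left(\dpr{\cl^{-1}\partial_\xi^{-1}\psi_0}{\partial_\xi^{-1}\psi_0}\right).
\]
It then remains to rewrite the scalar on the right in terms of the original data. Using the commutation $(\calI+\cm)^{1/2}\partial_\xi=\partial_\xi(\calI+\cm)^{1/2}$ one has $\partial_\xi^{-1}\psi_0=(\calI+\cm)^{1/2}U_c$, and undoing the conjugation yields $\dpr{\cl^{-1}\partial_\xi^{-1}\psi_0}{\partial_\xi^{-1}\psi_0}=\dpr{\cl_0^{-1}[(\calI+\cm)U_c]}{(\calI+\cm)U_c}$. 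Differentiating the existence equation \eref{s10} in $c$ produces $\cl_0^{-1}[(\calI+\cm)U_c]=-\partial_c U_c$, whence the scalar equals $-\tfrac12\partial_c\dpr{(\calI+\cm)U_c}{U_c}$. Since for a nonzero scalar $a$ one has $\rmn(-\tfrac12 a)=\rmp(a)$, this is exactly $\rmp(\partial_c\dpr{(\calI+\cm)U_c}{U_c})$, and combining with $\rmn(\cl)=\rmn(\cl_0)$ gives the stated formula. (Implicit here is the nondegeneracy $\partial_c\dpr{(\calI+\cm)U_c}{U_c}\ne0$, needed for \autoref{t:index} to apply.) Finally, the ``in particular'' assertions follow from the parity of $k_\rmc$ and $k_\rmi^-$: an odd value of $K_{\Ham}$ forces $k_\rmr\ge1$, while $\rmn(\cl_0)=1$ with $\partial_c\dpr{(\calI+\cm)U_c}{U_c}>0$ gives $K_{\Ham}=0$ and hence spectral stability.
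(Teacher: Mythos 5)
Your proposal is correct and follows essentially the same route as the paper: verify hypotheses (a)--(c) for the conjugated operator $\cl=(\calI+\cm)^{-1/2}\cl_0(\calI+\cm)^{-1/2}$ (inertia under congruence for the negative index, Weyl's theorem with the symbol $\tfrac{c\al(\xi)+c-1}{1+\al(\xi)}$ for the essential spectrum, and the kernel identification), then apply \autoref{t:index} with $\psi_0=(\calI+\cm)^{1/2}\partial_\xi U_c$ and reduce the constraint scalar to $-\tfrac12\partial_c\langle(\calI+\cm)U_c,U_c\rangle$ via the differentiated existence equation. Your explicit remark that the nondegeneracy $\partial_c\langle(\calI+\cm)U_c,U_c\rangle\neq0$ is implicitly required is a fair observation, as the paper leaves this tacit.
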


\begin{remark}
Comparing these results with the corresponding results of
\citet[Theorem~1]{lin:son08}, we see that regarding the requirements on the
multiplier $\al(\xi)$, they are slightly more general than the ones proposed
by Lin (see also \autoref{prop:47}).
\end{remark}

\subsection{Fractional BBM Equations}\label{fracBBM}

We are now ready to characterize the spectral stability of the ground state
traveling wave solutions of the fractional BBM equation, which is given by
\begin{equation}
\label{BBM:1}
\partial_tu+\partial_xu+\partial_t(|\p_x|^s u)+\partial_x(u^{p+1})=0
\end{equation}
(compare with \eref{lin:BBM}). In the traveling wave ansatz $U_c(\xi)$ with
$\xi=x-ct$ we obtain the existence equation
\begin{equation}
\label{BBM:5}
c |\p_\xi|^s U_c + (c-1) U_c - U_c^{p+1}=0.
\end{equation}
Under the assumption $c>1$, which is necessary for the existence of such
waves, we see that matters once again reduce to the solution $Q$ of
\eref{frKDV:2}. Indeed, the solution $U_c$ of \eref{BBM:5} can be written as
$$
U_c(x)=(c-1)^{1/p} Q\left(\left[\frac{c-1}{c}\right]^{1/s}x\right).
$$

Next, we verify that the operator $\cl_0=c|\p_\xi|^s+(c-1)-(p+1)U_c^p$
satisfies the requirements of \autoref{cor:BBM} for any $c>1$ and $s>0$.
Indeed, letting the multiplier satisfy $\al(\xi)=|\xi|^s$, we see that
$c\al(\xi)+c-1\geq c-1=:\delta$. Similar to the claims  of \autoref{prop:47},
we use the $|\xi|^{-1}$ decay of $Q$ (and $U_c$, respectively) to conclude
that the operator $(I+|\p_\xi|^s)^{-1/2} U_c^p (I+|\p_\xi|^s)^{-1/2}$ is a
compact operator on $L^2(\R)$. Finally, $\ker(\cl_0)=\Span\{\p_\xi U_c\}$ and
$\rmn(\cl_0)=1$ is a consequence\footnote{by a simple change of variables} of
the corresponding statement for the operator $\calL_+$ of
\citet{frank:uan12}.

Thus, we may apply our formula for the Hamilton-Krein index from
\autoref{cor:BBM}. To this end, we need to compute $ \p_c
\dpr{(\calI+\cm)U_c}{U_c}, $ which from the defining equation \eref{BBM:5} is
expressible as follows
\[
\dpr{(\calI+\cm)U_c}{U_c}=\f{1}{c}\dpr{U_c+U_c^{p+1}}{U_c}\\
=(c-1)^{\f{2}{p}-\f{1}{s}} c^{\f{1}{s}-1}
\langle Q,Q\rangle  +
(c-1)^{1+\f{2}{p}-\f{1}{s}}c^{\f{1}{s}-1}\langle Q^{p+1},Q\rangle.
\]
It follows that
$$
\p_c \dpr{(\calI+\cm)U_c}{U_c}= (c-1)^{ \f{2}{p}-\f{1}{s}-1}
c^{\f{1}{s}-2}[\left(c(2/p-1)+1-1/s\right)\langle Q,Q\rangle+
(2c/p+1-1/s)\langle Q^{p+1},Q\rangle].
$$
Since $c>1$, upon simplifying we have that
\[
\p_c \dpr{(\calI+\cm)U_c}{U_c}\propto
\left[(2-p)sc+(s-1)p\right]\langle Q,Q\rangle+
\left[2sc+(s-1)p\right]\langle Q^{p+1},Q\rangle.
\]
Now, using the existence equation \eref{frKDV:2} we have that
\[
\langle Q^{p+1},Q\rangle=\langle Q,Q\rangle+\langle|\partial_\xi|^sQ,Q\rangle=
\langle Q,Q\rangle+\langle|\partial_\xi|^{s/2}Q,|\partial_\xi|^{s/2}Q\rangle;
\]
thus, we can rewrite the above to say that
\[
\p_c \dpr{(\calI+\cm)U_c}{U_c}\propto
\left[(4-p)sc+2(s-1)p\right]\langle Q,Q\rangle+
\left[2sc+(s-1)p\right]\langle|\partial_\xi|^{s/2}Q,|\partial_\xi|^{s/2}Q\rangle.
\]
This allows us to conclude with the following:

\begin{theorem}\label{frac:BBM}
Let $0<s<2$, $p\in (0, p_{\max})$ and $c>1$. The unique (up to translation)
ground state $U_c$ of the fractional BBM equation (which is a solution of
\eref{BBM:5}) is spectrally stable if and only if
\[
\left[(4-p)sc+2(s-1)p\right]\langle Q,Q\rangle>
-\left[2sc+(s-1)p\right]\langle|\partial_\xi|^{s/2}Q,|\partial_\xi|^{s/2}Q\rangle.
\]
In particular, the wave is spectrally stable if $1\le s<2$ and $0<p\le4$. If
the inequality is reversed the linearized eigenvalue problem has precisely
one positive real eigenvalue, and the rest of the spectrum is purely
imaginary.
\end{theorem}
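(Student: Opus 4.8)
The plan is to read the index off directly from \autoref{cor:BBM}, whose hypotheses have already been verified in the preceding paragraphs for $\cl_0=c|\p_\xi|^s+(c-1)-(p+1)U_c^p$: with multiplier $\al(\xi)=|\xi|^s$ the essential-spectrum condition holds for $c>1$, the perturbation $(p+1)U_c^p$ is relatively compact by the $|\xi|^{-1}$ decay of $Q$, and (after the rescaling $U_c(x)=(c-1)^{1/p}Q([(c-1)/c]^{1/s}x)$) the results of \citet{frank:uan12} give $\rmn(\cl_0)=1$ with $\ker(\cl_0)=\Span\{\p_\xi U_c\}$. Hence $K_{\Ham}=\rmn(\cl_0)-\rmp(\p_c\dpr{(\calI+\cm)U_c}{U_c})=1-\rmp(\p_c\dpr{(\calI+\cm)U_c}{U_c})$. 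Because $k_\rmc$ and $k_\rmi^-$ are even, the wave is spectrally stable exactly when $K_{\Ham}=0$, i.e. when $\p_c\dpr{(\calI+\cm)U_c}{U_c}>0$.

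Next I would invoke the $c$-derivative computation carried out just above the statement, where $\p_c\dpr{(\calI+\cm)U_c}{U_c}$ was shown to equal $(c-1)^{2/p-1/s-1}c^{1/s-2}$ times the bracket $[(4-p)sc+2(s-1)p]\dpr{Q}{Q}+[2sc+(s-1)p]\dpr{|\p_\xi|^{s/2}Q}{|\p_\xi|^{s/2}Q}$. For $c>1$ the prefactor $(c-1)^{2/p-1/s-1}c^{1/s-2}$ is strictly positive, so the sign of the derivative coincides with the sign of the bracket. Moving the second summand to the right-hand side turns the condition ``bracket $>0$'' into precisely the displayed inequality, which yields the claimed ``if and only if''.

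For the explicit window $1\le s<2,\,0<p\le4$ the plan is a short sign check. Since $c>1$, $s>0$ we have $2sc>0$, and $s\ge1$, $p>0$ give $(s-1)p\ge0$, so the coefficient $2sc+(s-1)p$ is strictly positive; similarly $(4-p)sc\ge0$ (as $p\le4$) and $2(s-1)p\ge0$, so the coefficient of $\dpr{Q}{Q}$ is nonnegative. As $Q\not\equiv0$, both $\dpr{Q}{Q}$ and $\dpr{|\p_\xi|^{s/2}Q}{|\p_\xi|^{s/2}Q}$ are strictly positive, so the bracket is strictly positive throughout this range, and the stability criterion holds. Conversely, if the displayed inequality is reversed then $\p_c\dpr{(\calI+\cm)U_c}{U_c}<0$, whence $\rmp=0$ and $K_{\Ham}=1$; since $k_\rmc$ and $k_\rmi^-$ are even, this forces $k_\rmr=1$ and $k_\rmc=k_\rmi^-=0$, giving exactly one positive real eigenvalue with the remaining spectrum purely imaginary.

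With the analytic content (verification of the index-formula hypotheses and the $c$-derivative identity) already in hand, the proof is essentially an assembly step, and the only genuinely delicate point I anticipate is the uniform sign analysis of the two coefficients across the whole parameter window --- in particular the degenerate corner $s=1,\,p=4$, where $(4-p)sc+2(s-1)p$ vanishes and strict positivity of the bracket must be argued from the surviving $|\p_\xi|^{s/2}Q$ term alone rather than from both coefficients being positive.
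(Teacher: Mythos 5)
Your proposal follows the paper's own argument essentially verbatim: verify the hypotheses of \autoref{cor:BBM} for $\cl_0=c|\p_\xi|^s+(c-1)-(p+1)U_c^p$, use the scaling and the existence equation to reduce $\p_c\langle(\calI+\cm)U_c,U_c\rangle$ to the displayed bracket times a positive prefactor, and read off the sign. Your explicit attention to the corner $s=1$, $p=4$, where the coefficient of $\langle Q,Q\rangle$ vanishes and positivity must come from the $\langle|\p_\xi|^{s/2}Q,|\p_\xi|^{s/2}Q\rangle$ term alone, is a small point the paper leaves implicit, but the argument is the same.
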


\phantomsection                                         
\addcontentsline{toc}{section}{\refname}                

\bibliographystyle{plainnat}

\end{document}